\documentclass[10pt,reqno]{amsart}
\usepackage{url}
\usepackage{amsthm}
\usepackage{fullpage}
\usepackage{amsmath}
\usepackage{amssymb}
\usepackage{amsfonts}

\usepackage{enumitem}
\usepackage{mathscinet}
\usepackage{lipsum}
\usepackage[english]{babel}
\usepackage[autostyle]{csquotes}
\usepackage{bbold}
\usepackage{hyperref}

\hypersetup{
    colorlinks=true,
    linkcolor=red,
    citecolor=blue,
    }

\usepackage{graphicx}

\usepackage[utf8]{inputenc}
\usepackage[english]{babel}

\usepackage{color}
\usepackage{comment}

\newtheorem{thm}{Theorem}[section]
\newtheorem{definition}[thm]{Definition}
\newtheorem{theorem}[thm]{Theorem}
\newtheorem*{oseledec*}{Oseledec Theorem}
\newtheorem{cor}[thm]{Corollary}

\newtheorem{lemma}[thm]{Lemma}
\newtheorem{prop}[thm]{Proposition}

\makeatletter
\def\moverlay{\mathpalette\mov@rlay}
\def\mov@rlay#1#2{\leavevmode\vtop{%
   \baselineskip\z@skip \lineskiplimit-\maxdimen
   \ialign{\hfil$\m@th#1##$\hfil\cr#2\crcr}}}
\newcommand{\charfusion}[3][\mathord]{
    #1{\ifx#1\mathop\vphantom{#2}\fi
        \mathpalette\mov@rlay{#2\cr#3}
      }
    \ifx#1\mathop\expandafter\displaylimits\fi}
\makeatother

\newcommand{\bigcupdot}{\charfusion[\mathop]{\bigcup}{\cdot}}

\newcommand{\nocontentsline}[3]{}
\newcommand{\tocless}[2]{\bgroup\let\addcontentsline=\nocontentsline#1{#2}\egroup}

\usepackage{wasysym}

%\numberwithin{equation}{subsection}

%\usepackage[english]{babel}
%\usepackage[utf8]{inputenc}
%Includes ``References" in the table of contents
%\usepackage[nottoc]{tocbibind}

\def\Jac{\ensuremath{\mathrm{Jac}}}

\def\Vol{\ensuremath{\mathrm{Vol}}}
\def\E{\ensuremath{\mathcal{E}}}

\def\Vcs{\ensuremath{V^\mathrm{cs}}}

\title{Exponentially Fast Volume Limits} 
\author{S. Ben Ovadia$^{\star}$, F. Rodriguez-Hertz$^{\dagger}$}

\thanks{$^{\star}$ Department of Mathematics, Eberly College of Science, Pennsylvania State University, snir.benovadia@psu.edu \\ $\text{ }$$\text{ }$ $\text{ }$ $^\dagger$ Department of Mathematics, Eberly College of Science, Pennsylvania State University, fjr11@psu.edu}

\begin{document}
\maketitle
\begin{abstract}
Let $M$ be a $d$-dimensional closed Riemannian manifold, let $f\in\mathrm{Diff}^{1+\beta}(M)$, and denote by $m$ the Riemannian volume form of $M$. We prove that if $m\circ f^{-n}\xrightarrow[n\to\infty]{}\mu$ exponentially fast (see \textsection \ref{mainResults}), then $\mu$ is an SRB measure.

\begin{comment}
We show that every exponentially fast %ergodic
 limit of the pushed volume %with positive exponents of the volume %averages 
is an SRB measure. 

More precisely: %Let $\mathfrak{L}\left(\{\frac{1}{n}\sum_{k=0}^{n-1}m\circ f^{-k}\}_{n\geq0}\right)$ be the collection of the sequence's weak-* limit points.
Let $M$ be a $d$-dimensional closed Riemannian manifold, and let $f\in\mathrm{Diff}^{1+\beta}(M)$, $\beta>0$. Let $ \mu%\in \mathfrak{L}\left(\{\frac{1}{n}\sum_{k=0}^{n-1}m\circ f^{-k}\}_{n\geq0}\right)
$ be an $f$-invariant probability measure on $M$% s.t $\mu$ %is ergodic and 
%has a positive top Lyapunov exponent. %Assume furthermore that $\mu$ is \textbf{strongly non-atomic} in the sense that it gives $0$ measure to the boundary  of balls (or boxes) in $M$
. Assume that the pushed-volume converges to $\mu$ exponentially fast in the sense that %$$\exists C,\alpha,\gamma>0:\forall g\in \text{H\"ol}_\alpha(M), \left|\frac{1}{n_k}\sum_{j=0}^{n_k-1}m(g\circ f^j)-\mu(g)\right|\leq C>0\cdot \|g\|_\alpha\cdot e^{-\gamma n_k},$$
\begin{equation}\label{expmix}\exists C>0,\alpha\in(0,1],\gamma>0:\forall g\in \mathrm{H\ddot{o}l}_\alpha(M), \left|\frac{1}{N}\sum_{k=n}^{n+N-1}m(g\circ f^k)-\mu(g)\right|\leq C\cdot \|g\|_\alpha\cdot e^{-\gamma \cdot(n\wedge N)},\end{equation}

%for some subsequence $n_k\uparrow \infty$ (independent of $g$). 
where $n\wedge N:=\min\{n,N\}$. Then we show that $\mu$ is an %\textbf{physical} 
SRB measure. %The statement extends to limits of averages of the pushed-volume, see \textsection \ref{averages}.
Furthermore, we show that $\mu$ is a weakly physical measure with a full Basin.

Moreover, under additional assumptions (see \eqref{NAIC4} in \textsection \ref{almostExpMix}) where the volume ``almost" exponentially mixes but is not necessarily invariant, we show that unless $\mu$ is a Dirac mass (a necessary condition), $\mu$ admits a positive exponent almost everywhere. In that case, we show that $\mu$ must be %a 
ergodic% mixing% SRB measure with positive exponents almost everywhere
, %which mixes exponentially fast on small exponential balls (and hence is ergodic)
and that it is the unique SRB measure of the system. Moreover, we show that every ergodic invariant measure %in the system 
$\nu$ satisfies $\max_i\chi^+_i(\nu)\geq \frac{\gamma}{2d}>0$.% Together with the ideas of \cite{ExpMixBern}, we conclude that $\mu$ is Bernoulli.
\end{comment}
\end{abstract}

\section{Introduction}
\subsection{Motivation}
An important object in smooth ergodic theory is SRB measures, named after Sinai, Ruelle, and Bowen. SRB measures are invariant measures whose conditional measures on unstable leaves are absolutely continuous w.r.t the induced Riemannian volume on unstable leaves (see \cite{YoungSRBsurvey} for more details and properties of SRB measures).

Aside for potential physicality and compatibility with the Riemannian volume in dissipative systems, SRB measures are important as possible limit points of the Riemannian volume under the dynamics. In \cite{B4} Bowen shows that for Axiom A attractors which support an SRB measure, the volume measure of the saturation of the attractor by stable leaves converges exponentially fast under the dynamics to the unique SRB measure supported on the attractor (the notion of rate of convergence relates to a fixed space of test functions). However, in the general case it is not clear if one can expect to always achieve an SRB measure as a limit point of the pushed Riemannian volume. In particular, some ``nice" systems do not admit an SRB measure (see \cite{YoungCounterExample}).

This gives rise to the natural question: When can we achieve an SRB measure through pushing forwards the Riemannian volume of a smooth dynamical system? Before we describe the results of this paper and how they relate to this question, we wish to mention another fundamental field of studies in smooth ergodic theory, and how it relates to this question. 

The {\em smooth realization problem} posed by von Neumann is the question of what dynamical systems $(X,T,\nu)$ (not necessarily smooth) can be realized through a measure theoretic isomorphism as a smooth system $(M,f,m)$, where $M$ is a closed Riemannian manifold, $f$ is a smooth diffeomorphism of $M$, and $m=m\circ f^{-1}$ is the Riemannian volume of $M$. Notice that an immediate restriction of the smoothly-realizable dynamical systems is having finite metric entropy. A recent advancement in this direction is due to Dolgopyat, Kanigowski, and Rodriguez-Hertz, where they prove that for smooth systems which preserve volume, exponential mixing implies Bernoulli (\cite{ExpMixBern}). Exponential mixing is a property of the smooth structure, as it requires specifying a space of regular test functions on which the mixing estimates hold; however their result nonetheless explores a restriction on the ergodic properties of smooth systems. See also \cite{PesinSenti} and the corresponding discussion in \cite{KatokBook23}.

A natural extension of the smooth realization problem can then be, what dynamical systems $(X,T,\nu)$ can be realized through a measure theoretic isomorphism as a smooth system $(M,f,\mu)$, where $M$ is a closed Riemannian manifold, $f$ is a smooth diffeomorphism of $M$, and $\mu=\lim_n m\circ f^{-n}$, where $m$ is the Riemannian volume of $M$. Similarly, $\int g\circ f^n h dm\xrightarrow[]{\mathrm{exp}} \int gdm\int hdm$ when $m=m\circ f^{-1}$, can be naturally extended to  $\int g\circ f^n h dm\xrightarrow[]{\mathrm{exp}} \int gd\mu\int hdm$ where $m$ is not necessarily invariant, but $\mu$ is. Can we say that $\mu$ is Bernoulli in that case? We believe that the answer is positive based on a consequence of this work, as we explain in \textsection \ref{mainResults}. 

The problem of finding a Banach space of test functions which admits certain properties is not a trivial issue. Another instance of that same challenge is proving the spectral gap property, which requires defining a suitable Banach space of test functions on which the dynamics act as a linear operator with a spectral gap. Often the space of such test functions is non-trivial in the sense that one studies functions which are regular on stable leaves, but may have merely measurable behavior w.r.t the topology of the ambient manifold.

Moreover, the relationship between properties such as a spectral gap (on some ``reasonable" Banach space) and exponential mixing is still an open mystery. In what cases can one have exponential mixing without a spectral gap? These types of questions are generally still open, while being fundamental. 

Finally, an additional natural property in this family would be the exponential convergence of the volume to an invariant measure, as in \eqref{expmix}. This property on its own is not enough to conclude any stronger ergodic properties (e.g $f=\mathrm{Id}_M$, or even $f=A\times \mathrm{Id}_{\mathbb{S}^1}$ where $A$ is a volume-preserving linear Anosov map of the torus). However, we show that it is indeed enough to conclude that the limiting measure is an SRB measure, possibly in the degenerate sense that $h_\mu(f)=\int \sum \chi^+d\mu=0$.

To sum up the two independent directions of study we mentioned: We wish to understand when can limits of the pushed volume be SRB measures for thermodynamic purposes; and also we wish to understand what properties restrict smooth systems in terms of ergodic properties and the extended smooth realization problem. Possible future lines of study include exploring the relationship between different smooth ergodic properties, such as exponential mixing and a spectral gap. 

\subsection{Main results}\label{mainResults} $M$ is a $d$-dimensional closed Riemannian manifold, $f\in\mathrm{Diff}^{1+\beta}(M)$, and $m$ denotes the Riemannian volume form of $M$. Let us first introduce three different notions of {\em exponential convergence}: $\exists C>0,\alpha\in(0,1],\gamma>0$ s.t $\forall g,h\in \mathrm{H\ddot{o}l}_\alpha(M)$,

\begin{equation}\label{expmixPrime}\left|\frac{1}{N}\sum_{k=n}^{n+N-1}m(g\circ f^k)-\mu(g)\right|\leq C\cdot \|g\|_\alpha\cdot e^{-\gamma \cdot\min\{n, N\}},\end{equation}

\begin{equation}\label{expConv}\Big|m(g\circ f^n)-\mu(g)\Big|\leq C\cdot \|g\|_\alpha\cdot e^{-\gamma n},\end{equation}

\begin{equation}\label{trueExpMix}\left|\int g\circ f^n\cdot hdm-\int gd\mu\int hdm \right|\leq C\cdot \|g\|_\alpha\|h\|_\alpha\cdot e^{-\gamma n}.\end{equation} 

It is clear that \eqref{trueExpMix}$\Rightarrow$\eqref{expConv}$\Rightarrow$\eqref{expmixPrime}. We also say that {\em the volume is almost exponentially mixing} (however the volume need \textbf{not} be $f$-invariant) if
\begin{equation}\label{expDecay}
	\left|\int g\circ f^n\cdot hdm\right|\leq C\cdot \|g\|_\alpha\|h\|_\alpha\cdot e^{-\gamma n}\text{, whenever }\int hdm=0.
\end{equation}
Note that \eqref{expDecay} is proper even when $m$ is not $f$-invariant; but when $m=m\circ f^{-1}$ it is equivalent to $m$ being exponentially mixing. The advantage of condition \eqref{expDecay} is that it does not require a-priori a background $f$-invariant measure in order to test it.

\medskip
The main results of the manuscript are structured in the following way:
\begin{enumerate}
	\item In \textsection \ref{ergoCase} we show that exponential convergence in the sense of \eqref{expmixPrime} to an ergodic limit point implies that the limit point is an SRB measure (not necessarily with a positive entropy). The purpose of this section is didactic. We show in addition that in this case where $\mu$ is ergodic, it is also a {\em weakly physical} measure (see Definition \ref{WPhys}) with a full Basin (see Theorem \ref{PHYSICS}).
\item In \textsection \ref{NonErgoCase} we prove that exponential convergence in the sense of \eqref{expmixPrime} to a limit point (not necessarily ergodic) implies that the limit point is an SRB measure (still not necessarily with positive entropy). Note that one cannot expect more ergodic properties without additional assumptions.
	\item In \textsection \ref{almostExpMix} we show that almost exponential mixing of the volume in the sense of \eqref{expDecay} implies the existence of an $f$-invariant $\mu$ such that $m\circ f^{-n}\xrightarrow[n\to\infty]{}\mu$ exponentially fast (in the strong sense of \eqref{trueExpMix}), and consequently that condition \eqref{trueExpMix} implies that $\mu$ must either be the unique (and hence ergodic) SRB measure of the system, and has positive entropy%, and it also admits certain mixing properties on unstable leaves (see Proposition \ref{UMix})
, or that $\mu$ is a Dirac mass at a fixed point which is an SRB measure in the degenerate sense that $h_\mu(f)=\int \sum \chi^+d\mu=0$. Note that the degenerate case cannot be ruled out, as illustrated in the remark after Theorem \ref{posExps}. %Moreover, in Theorem \ref{posExps} we show a uniform bound from below on the positive exponents of all ergodic invariant measures of the system, aside for at most possibly the limit point $\mu$.
\end{enumerate}

In the case we treat in \textsection \ref{almostExpMix}, when $h_\mu(f)>0$, we believe that the methods of \cite{ExpMixBern} can be extended to show that $\mu$ is Bernoulli. This is a consequence of the observation that the proof of \cite{ExpMixBern} only truly requires the conditional measures on unstable leaves to be smooth, and Proposition \ref{UMix} gives the right notion of exponential mixing on unstable leaves for their methods to be extended. 

In addition, notice that the assumption of \eqref{expmixPrime} is formally weaker than \eqref{expConv}. The weaker assumption allows one to rely on some averaging in order to gain exponential convergence, rather than just pushing forwards the volume.

\medskip
Our proof relies on the following tools: We use coverings by exponential Bowen balls of the form $B(\cdot,n,e^{-n\delta})$ which have the following three properties:
\begin{enumerate}
	\item $\lim_{\delta\to0}\limsup\frac{-1}{n}\log\mu(B(\cdot,n,e^{-n\delta}))=h_{\mu_x}(f)$ $\mu$-a.e, where $\mu=\int \mu_xd\mu(x)$ is its ergodic decomposition (see \cite{NLE}),
	\item If $x$ is a Pesin regular point, then for all $n$ large enough, $\forall k\leq n$, $f^k[B(x,n,e^{-n\delta})] $ is contained in the Pesin chart of $f^k(x)$,
	\item Subsets of Pesin blocks can be covered by exponential Bowen balls with exponentially low multiplicity, for all $n$ large enough (see \cite[Lemma~2.2]{NLE}).
\end{enumerate}

Furthermore, our proof of the results of \textsection \ref{almostExpMix} relies on the construction of fake $cs$-foliations which are absolutely continuous in small exponential neighborhoods of Pesin regular points. These fake foliations were constructed by Dolgopyat, Kanigowski, and Rodriguez-Hertz in \cite{ExpMixBern}.	

The key idea of the proof of \textsection \ref{ergoCase} and \textsection \ref{NonErgoCase} is a type of shadowing argument, where since we cannot mix on exponential Bowen balls of $n$ steps, we break down the orbit segment of $n$ steps into $\frac{1}{\epsilon}$-many orbit segments of $n\epsilon$-many steps. Thus we can study points which remain close to a large measure set of ``good points", but not necessarily lie in the Bowen ball of any ``good point" for the whole $n$ steps.

\tableofcontents

\section{The ergodic case}\label{ergoCase}

\subsection{Preliminary parameter choices}\label{prel}
For didactic purposes, we treat first the ergodic case, as the argument is much clearer in that case. In this section (and in \textsection \ref{NonErgoCase}) we assume that $\exists C,\gamma,\alpha>0$ s.t 
\begin{equation}\label{expmix} \forall g\in \mathrm{H\ddot{o}l}_\alpha(M),\text{ }\left|\frac{1}{N}\sum_{k=n}^{n+N-1}m(g\circ f^k)-\mu(g)\right|\leq C\cdot \|g\|_\alpha\cdot e^{-\gamma \cdot\min\{n, N\}},\end{equation}
the weakest notion of exponential convergence. Assume that $\mu$ is ergodic%, and that $\chi^u(\mu)>h_\mu(f)$, otherwise we are done
. Let $\epsilon%\ll \frac{\chi^u(\mu)-h_\mu(f)}{2}%\equiv \Delta_\mu
>0$, and set:  %By Lemma \ref{chis}, and since $\chi^u(\mu)>0$, it follows 
%%Assume w.l.o.g that $\chi^s(\mu)>0$ (otherwise $B(x,e^{-\epsilon n})\subseteq B(x,-n,e^{-\frac{\epsilon}{2}n})$ for $\mu$-a.e $x$ and sufficiently large $n$, and we can apply the exponential mixing assumption on the exponential Bowen ball). Therefore the following is well defined.

\begin{enumerate}
\item Let $\mathcal{K}_\epsilon$ be a set s.t $\mu(\mathcal{K}_\epsilon)\geq e^{-\epsilon^4}$, $\mu(B(x,-n\epsilon,e^{-2\delta n})),\mu(B(x,-n\epsilon,e^{-\delta n}))=e^{-n\epsilon(h_\mu(f)\pm \epsilon^2)}$ for all $n\geq n_\epsilon$, for some $\delta\in (0,\epsilon^2)$ (see \cite{NLE}).
%Brin-Katok set where the measure estimate for Bowen balls of positive radius $\leq r_\epsilon\leq \frac{1}{\ell}$ is bounded by an $\epsilon^2$ estimates for all $n\geq n_\epsilon\in\mathbb{N}$, and $\mu(\mathcal{K}_\epsilon)\geq e^{-\epsilon ^{4}}$. %Furthermore, we want that $$\frac{\mu(B_{2^{\frac{\chi^s(\mu)}{h_\mu(f)}}r}(x))}{\mu(B_{r}(x))}=2\cdot e^{\pm\epsilon^2},\forall x\in\mathcal{K}_\epsilon,$$
%and sufficiently small $r>0$. We assume w.l.o.g that $n_\epsilon=0$.

%\item Let $n_\epsilon'\in\mathbb{N}$ and a corr. Lebesgue Differentiation set $\mathcal{L}_\epsilon\subseteq \Lambda_{\ell'}\cap \mathcal{K}_\epsilon$ (w.r.t the $s$-measures of $\mu|_{\Lambda_{\ell'}}$) s.t
%\begin{enumerate}
%\item $\mu(\mathcal{L}_\epsilon)\geq \mu(\Lambda_{\ell'}\cap \mathcal{K}_\epsilon)e^{-\epsilon^{3\cdot2^\kappa}}$,
%\item $n\geq n_\epsilon', x\in\mathcal{L}_\epsilon$, $\mu_{B_{r_\epsilon}(x,n)}(\Lambda_{\ell'}\cap \mathcal{K}_\epsilon)\geq e^{-\epsilon^{3\cdot2^\kappa}}$.
%\end{enumerate}
\item Let $\ell=\ell(\epsilon)\in\mathbb{N}$ s.t $\mu(\Lambda_{\ell}^{(\underline{\chi},\tau)})\geq e^{-\epsilon^{4}}$, with $0<\tau<\min\{\tau_{\underline\chi},\frac{1}{3d}\delta^3\}$.
\item Set $E_\epsilon:= \Lambda_\ell\cap \mathcal{K}_\epsilon$. Then $\mu(E_\epsilon)\geq  e^{-\epsilon ^{3}}$ for all sufficiently small $\epsilon>0$. W.l.o.g assume that $\epsilon=\frac{1}{p^2}$, and that $p^2| n$ when we choose some large $n$ s.t $e^{-\delta n}\ll\frac{1}{\ell}$, so the ceiling values can be omitted.\footnote{That is, $n$ takes values in $\{m\cdot p^2\}_{m\geq 1}$ for some $p\in\mathbb{N}$.}
%\item Let $\mathcal{L}_\epsilon$ be the set of Lebesgue Differenitation density points of $\mathcal{K}_\epsilon\cap \Lambda_{\ell}$, w.r.t $\mu$, \footnote{By applying the proof to functions which are constant on fake $cu$-leaves in the $r_\epsilon$ chart of the point, we want narrow strips around the point instead of balls.} s.t $\mu(B_{r}(x)\cap \mathcal{K}_\epsilon\cap \Lambda_{\ell})\geq e^{-\epsilon^2}\mu(B_{r}(x))$ $\forall x\in \mathcal{L}_\epsilon$, $\forall 0<r\leq s_\epsilon$\footnote{This ball spans over the $cu$-direction of the chart, and has a small $s$-radius.}. Set $n_\epsilon'$ so large that $e^{-n_\epsilon' \epsilon^2}<s_\epsilon$. W.l.o.g assume $n_\epsilon'=0$.
%\item Let $\mathcal{C}^{(n)}$ be a cover of $\Lambda_\ell$ by charts of size $\substack{e^{-\epsilon n}\\s}\times\substack{r_\epsilon\\cu}$ s.t $\#\mathcal{C}^{(n)}\leq C_d\cdot e^{\epsilon n}$, where $C_d$ is the Besicovitch constant of $M$. 
\item Let $n\geq n_\epsilon$, and let $%\widetilde{
\widetilde{\mathcal{A}}%}
^{(n)}_\epsilon$ be a cover of $E_\epsilon$ by %charts of size $\substack{\frac{1}{2}e^{-\epsilon n}r_\epsilon\\s}\times\substack{\frac{1}{2}e^{-\epsilon^2n}r_\epsilon\\cu}$ s.t 
Bowen balls $B(\cdot, -n\epsilon, e^{-2\delta n} )$ with multiplicity bounded by $e^{3d\tau n}\leq e^{\delta^3 n}\leq e^{\epsilon^6n}$, and in particular with cardinality bounded by $\#\mathcal{A}^{(n)}_\epsilon\leq e^{n\epsilon (h_\mu(f)+\epsilon^2)}e^{\epsilon^6n}$, as in \cite[Lemma~2.2]{NLE}. Set $\mathcal{A}^{(n)}_\epsilon:=\{B(x,n\epsilon,e^{-\delta n}): B(x,n\epsilon,e^{-2\delta n})\in \widetilde{\mathcal{A}}^{(n)}_\epsilon\}$. 
\end{enumerate}

\subsection{Large asymptotic-volume set of points shadowed by a set of $\mu$-good points}\label{shadowing}

%\begin{definition}
%$\mathcal{A}^{(n)}_\epsilon:=\{2B:B\in\mathcal{C}^{(n)}_\epsilon\}$.	
%\end{definition}

\begin{lemma}\label{firstLemma}
Let $0\leq i\leq(1-2\sqrt{\epsilon}) \frac{1}{\epsilon}$% s.t $i\cdot \frac{n\epsilon}{\chi^s}> \frac{\gamma}{4}$
. Then for all $\epsilon>0$ sufficiently small, $\exists n_\epsilon'\geq n_\epsilon$ s.t for all $n\geq n_\epsilon'$, $$\frac{1}{\sqrt\epsilon n}\sum_{k=-i\epsilon n +n(1-\sqrt{\epsilon})}^{-i\epsilon n+n-1} m\circ f^{-k}(M\setminus\bigcup\mathcal{A}^{(n)}_\epsilon)\leq \epsilon^2.$$
\end{lemma}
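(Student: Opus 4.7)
The plan is to upper-bound the averaged $(m\circ f^{-k})$-measure of $M\setminus\bigcup\mathcal{A}^{(n)}_\epsilon$ by combining a crude $\mu$-bound with the exponential convergence assumption \eqref{expmix} applied to a H\"older approximation of the indicator function. First, since the enlarged radius $e^{-\delta n}$ dominates $e^{-2\delta n}$, the balls in $\mathcal{A}^{(n)}_\epsilon$ cover those in $\widetilde{\mathcal{A}}^{(n)}_\epsilon$ and hence cover $E_\epsilon$, giving
\[
\mu\Bigl(M\setminus\bigcup\mathcal{A}^{(n)}_\epsilon\Bigr)\le 1-\mu(E_\epsilon)\le 1-e^{-\epsilon^3}\le 2\epsilon^3
\]
for $\epsilon$ small. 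The constraint $i\le(1-2\sqrt\epsilon)/\epsilon$ is tailored exactly so that the starting index $n_0:=n(1-\sqrt\epsilon-i\epsilon)$ of the summation window satisfies $n_0\ge n\sqrt\epsilon$; thus $\min(n_0,\sqrt\epsilon\,n)\ge n\sqrt\epsilon$ and \eqref{expmix} will give decay at rate $e^{-\gamma\sqrt\epsilon\,n}$.

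Next I would construct the test function. Set $U:=\bigcup\widetilde{\mathcal{A}}^{(n)}_\epsilon$ and $V:=\bigcup\mathcal{A}^{(n)}_\epsilon$, and take the Urysohn function
\[
g(x):=\frac{d(x,U)}{d(x,U)+d(x,M\setminus V)},
\]
so that $g\equiv 0$ on $U$, $g\equiv 1$ on $M\setminus V$, $0\le g\le 1$, $g\ge \mathbb{1}_{M\setminus V}$, $\mu(g)\le \mu(M\setminus U)\le 2\epsilon^3$, and $\mathrm{Lip}(g)\le 1/d(U,M\setminus V)$. The key geometric input is that if $y\in B(x,n\epsilon,e^{-2\delta n})$ and $z\notin B(x,n\epsilon,e^{-\delta n})$, then some iterate $f^k$ with $|k|\le n\epsilon$ separates the two points by at least $\tfrac12 e^{-\delta n}$; since $\mathrm{Lip}(f^k)\le M_f^{n\epsilon}$ for $M_f:=\max_M\max(\|d_\cdot f\|,\|d_\cdot f^{-1}\|)$, this yields
\[
d(U,M\setminus V)\ge \tfrac12\, e^{-(\delta+\epsilon\log M_f)n},\qquad \|g\|_\alpha\le 3e^{(\delta+\epsilon\log M_f)\alpha n}.
\]

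Putting this together, applying \eqref{expmix} to $g$ with outer parameters $n_0$ and $N=\sqrt\epsilon\,n$ and using $m\circ f^{-k}(M\setminus V)\le m(g\circ f^k)$ gives
\[
\frac{1}{\sqrt\epsilon\,n}\sum_{k=n_0}^{n_0+\sqrt\epsilon\,n-1}m\circ f^{-k}(M\setminus V)\le \mu(g)+C\|g\|_\alpha e^{-\gamma\sqrt\epsilon\,n}\le 2\epsilon^3+3C\,e^{\left((\delta+\epsilon\log M_f)\alpha-\gamma\sqrt\epsilon\right)n}.
\]
For $\epsilon$ small one has $\delta<\epsilon^2$ and $(\delta+\epsilon\log M_f)\alpha<\tfrac12\gamma\sqrt\epsilon$, so the second term tends to $0$ as $n\to\infty$; for $n\ge n'_\epsilon$ the total is then at most $\epsilon^2$, as required.

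The main obstacle is controlling the H\"older norm of the approximation $g$: any H\"older separation between the inner and outer level of Bowen balls, whose Riemannian thickness collapses like $e^{-(\delta+\epsilon\log M_f)n}$, must have norm growing exponentially in $n$, and the whole argument rests on the decay rate $\gamma\sqrt\epsilon$ from \eqref{expmix} strictly dominating this growth. This is precisely why the parameters were chosen with $\delta<\epsilon^2$ in \textsection\ref{prel}, and why $\epsilon$ is tacitly assumed small relative to $\gamma,\alpha$ and $\log M_f$.
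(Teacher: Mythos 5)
Your proposal is correct and follows essentially the same route as the paper: both hinge on the observation that the summation window forced by $i\le(1-2\sqrt\epsilon)\frac{1}{\epsilon}$ has starting index and length at least $\sqrt\epsilon\,n$, and both sandwich the indicator between $\bigcup\widetilde{\mathcal{A}}^{(n)}_\epsilon$ and $\bigcup\mathcal{A}^{(n)}_\epsilon$ by a Lipschitz test function whose norm grows like $e^{(\delta+\epsilon\log M_f)n}$ (controlled via $\mathrm{Lip}(f^{\pm k})\le M_f^{n\epsilon}$ for $|k|\le n\epsilon$), which is then beaten by the rate $e^{-\gamma\sqrt\epsilon n}$ from \eqref{expmix} together with $\mu(E_\epsilon)\ge e^{-\epsilon^3}$. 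The only (cosmetic) difference is that you build one Urysohn-type quotient function from the separation $d\bigl(\bigcup\widetilde{\mathcal{A}}^{(n)}_\epsilon,\,M\setminus\bigcup\mathcal{A}^{(n)}_\epsilon\bigr)$ and bound $\mathbb{1}_{M\setminus\bigcup\mathcal{A}^{(n)}_\epsilon}$ from above, whereas the paper takes a maximum of per-ball bump functions and bounds $\mathbb{1}_{\bigcup\mathcal{A}^{(n)}_\epsilon}$ from below.
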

\begin{proof}
$\frac{1}{\sqrt\epsilon n}\sum_{k=-i\epsilon n+ n(1-\sqrt\epsilon)}^{-i\epsilon  n+n-1} m\circ f^{-k}(M\setminus\bigcup\mathcal{A}^{(n)}_\epsilon)=1-\frac{1}{\sqrt\epsilon n}\sum_{k=-i\epsilon n+ n(1-\sqrt\epsilon)}^{-i\epsilon n+n-1} m\circ f^{-k}(\bigcup\mathcal{A}^{(n)}_\epsilon)= 1-\frac{1}{\sqrt\epsilon n}\sum_{k=-i\epsilon n+ n(1-\sqrt\epsilon)}^{-i\epsilon n+n-1} m\circ f^{-k}(\mathbb{1}_{\bigcup\mathcal{A}^{(n)}_\epsilon})$.

For every $B\in\mathcal{C}^{(n)}_\epsilon$, define $g^{(n)}_B$ be a Lipschitz function s.t $g^{(n)}_B|_{B(x_B,n\epsilon,e^{-2\delta n})}=1$, $g^{(n)}_B|_{B(x_B,n\epsilon,e^{-\delta n})^c}=0$, and $\mathrm{Lip}(g^{(n)})\leq e^{2n\epsilon\log M_f}$, where $M_f:=\max_M\{\|d_\cdot f\|, \|d_\cdot f^{-1}\|\}$. %\footnote{$e^{-\epsilon n}-e^{-\epsilon n}e^{-e^{-2\epsilon nd}}= e^{-\epsilon n}(1-e^{-e^{-2\epsilon nd}} )\geq \frac{1}{2}e^{-\epsilon n}\cdot e^{-2\epsilon nd}\geq\frac{1}{2e^{3\epsilon nd}}$.} 
Notice: $\mathbb{1}_{\bigcup \mathcal{A}^{(n)}_\epsilon}=\max_{B\in\mathcal{A}^{(n)}_\epsilon}\mathbb{1}_B\geq \max_{B\in\mathcal{C}_\epsilon^{(n)}}g^{(n)}_B =: g^{(n)}$.

\textit{Claim:} $\mathrm{Lip}(g^{(n)})\leq e^{2n\epsilon\log M_f}$.

\textit{Proof:}
We prove that if $g_1$ and $g_2$ are $L$-Lipschitz, then $g_1\vee g_2:=\max\{g_1,g_2\}$ is $L$-Lipschitz. The claim for $g^{(n)}$ follows by induction. Let $x,y\in M$. If $g_1(x)\geq g_2(x)$ and $g_1(y)\geq g_2(y)$, or if $g_2(x)\geq g_1(x)$ and $g_2(y)\geq g_1(y)$, then
$$\frac{| (g_1\vee g_2)(x) -(g_1\vee g_2)(y) |}{|x-y|}\leq L$$
by the Lipschitz properties of $g_1$ and $g_2$.

We therefore may assume that w.l.o.g $g_1(x)\geq g_2(x)$ and $g_1(y)\leq g_2(y)$ (otherwise switch the roles of $g_1$ and $g_2$). Then,

$$g_1(x)\leq L\cdot|x-y|+g_1(y)\leq L\cdot|x-y|+g_2(y),$$
and so
$$g_1(x)-g_2(y)\leq L\cdot |x-y|.$$
Similarly,

$$g_2(y)-g_1(x)\leq L\cdot |x-y|.$$

Therefore, $| g_1(y)-g_2(x)|\leq L\cdot |x-y| $, and so

$$| (g_1\vee g_2)(x) -(g_1\vee g_2)(y)|=|g_1(x)-g_2(y)|\leq L\cdot |x-y|.$$
This concludes the proof of the claim.

\medskip
By the exponential convergence of the volume averages given by \eqref{expmix}, 

\begin{align*}
\frac{1}{\sqrt\epsilon n}\sum_{k=-i\epsilon n+ n(1-\sqrt\epsilon)}^{-i\epsilon  n+n-1} m\circ f^{-k}(\mathbb{1}_{\bigcup\mathcal{A}^{(n)}_\epsilon})\geq & \frac{1}{\sqrt\epsilon n}\sum_{k=-i\epsilon n+ n(1-\sqrt\epsilon)}^{-i\epsilon  n+n-1}  m\circ f^{-k}(g^{(n)})=\mu(g^{(n)})\pm Ce^{-\gamma \sqrt\epsilon n}e^{2\epsilon\log M_f n}\\
\geq &\mu(\max_{B\in\widetilde{\mathcal{A}}^{(n)}_\epsilon}\mathbb{1}_B)- Ce^{-\gamma \sqrt\epsilon n}e^{2\epsilon\log M_f n} =\mu(\bigcup \widetilde{\mathcal{A}}^{(n)}_\epsilon)- Ce^{-\gamma \sqrt\epsilon n}e^{2\epsilon\log M_f n}\\
\geq&\mu(E_\epsilon)- Ce^{-\gamma \sqrt\epsilon n}e^{2\epsilon\log M_f n}\geq e^{-\epsilon^2}-Ce^{-\gamma \sqrt\epsilon n}e^{2\epsilon\log M_f n}.	
\end{align*}

Then for all $\epsilon>0$ s.t $\gamma\sqrt\epsilon>2\log M_f\epsilon$ and $\frac{\epsilon^4}{2}\geq2\frac{\epsilon^6}{6} $, and for all $n$ large enough so $Ce^{-\gamma \sqrt\epsilon n}e^{2\epsilon\log M_f n}\leq \frac{\epsilon^6}{6}$, we have 

\begin{align*}
\frac{1}{\sqrt\epsilon n}\sum_{k=-i\epsilon n+ n(1-\sqrt\epsilon)}^{-i\epsilon  n+n-1}  m\circ f^{-k}(M\setminus\bigcup\mathcal{A}^{(n)}_\epsilon)\leq 1-e^{-\epsilon^2}+Ce^{-\gamma \sqrt\epsilon n}e^{2\epsilon n\log M_f}\leq \epsilon^2-\frac{\epsilon^4}{2}+\frac{\epsilon^6}{6} +\frac{\epsilon^6}{6}\leq \epsilon^2.	
\end{align*}

\end{proof}

\begin{definition}\label{Sn}
$$\mathcal{S}_n:=\{x\in \bigcup\mathcal{A}^{(n)}_\epsilon:\text{for all }0\leq i\leq(1-2\sqrt{\epsilon})\frac{1}{\epsilon}, f^{-i n\epsilon}(x)\in\bigcup\mathcal{A}^{(n)}_\epsilon\}$$ is the set of points in $\bigcup\mathcal{A}^{(n)}_\epsilon$ which are shadowed by $E_\epsilon$ for at least $(1-2\sqrt\epsilon)n$-many steps backwards.	
\end{definition}

%Notice, the elements of $\mathcal{A}^{(n)}_\epsilon$ are contained in $\frac{n\epsilon}{\chi^s}$-Bowen balls of points in $E_\epsilon${\color{blue} (up to a small error which depends on the preciseness of Pesin estimates in charts)}{\color{green} this disclaimer is no longer needed when we use exp. Bowen balls and not charts}.

\begin{theorem}\label{firstTheorem}
Let $n'_\epsilon\geq0$ as in Lemma \ref{firstLemma}, then for all $\epsilon>0$ sufficiently small and for all $n\geq n_\epsilon'$, 
$$\frac{1}{\sqrt\epsilon n}\sum_{k=n(1-\sqrt\epsilon)}^{n-1}  m\circ f^{-k}(\mathcal{S}_n)\geq e^{-\epsilon^\frac{3}{4}
}.$$
\end{theorem}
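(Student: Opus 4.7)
The plan is to reduce the theorem to Lemma \ref{firstLemma} by a union bound together with a time-shift. Observe that from Definition \ref{Sn},
$$\mathcal{S}_n \;=\; \bigcap_{i=0}^{\lfloor(1-2\sqrt\epsilon)/\epsilon\rfloor} f^{i n\epsilon}\!\Bigl(\bigcup\mathcal{A}^{(n)}_\epsilon\Bigr),$$
since $x\in f^{in\epsilon}(\bigcup\mathcal{A}^{(n)}_\epsilon)$ iff $f^{-in\epsilon}(x)\in\bigcup\mathcal{A}^{(n)}_\epsilon$. Taking complements,
$$M\setminus \mathcal{S}_n \;\subseteq\; \bigcup_{i=0}^{\lfloor(1-2\sqrt\epsilon)/\epsilon\rfloor} f^{in\epsilon}\!\Bigl(M\setminus\bigcup\mathcal{A}^{(n)}_\epsilon\Bigr).$$

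The next step is to rewrite the pullback of each shifted set in terms of the measures appearing in Lemma \ref{firstLemma}. Since $m\circ f^{-k}(f^{in\epsilon}(A))=m(f^{-(k-in\epsilon)}(A))=m\circ f^{-(k-in\epsilon)}(A)$ for any Borel $A\subseteq M$, applying subadditivity yields
\begin{align*}
\sum_{k=n(1-\sqrt\epsilon)}^{n-1} m\circ f^{-k}(M\setminus \mathcal{S}_n)
\;\leq\; \sum_{i=0}^{\lfloor(1-2\sqrt\epsilon)/\epsilon\rfloor} \sum_{k=n(1-\sqrt\epsilon)}^{n-1} m\circ f^{-(k-in\epsilon)}\!\Bigl(M\setminus \bigcup\mathcal{A}^{(n)}_\epsilon\Bigr).
\end{align*}
For each fixed $i$, the inner sum, after the change of variables $k'=k-in\epsilon$, runs exactly over the interval $[-in\epsilon+n(1-\sqrt\epsilon),\,-in\epsilon+n-1]$ appearing in Lemma \ref{firstLemma}, and is therefore bounded by $\sqrt\epsilon\, n\cdot \epsilon^2$, provided $n\geq n_\epsilon'$.

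Summing over $i$ (there are at most $1/\epsilon$ values of $i$) and dividing by $\sqrt\epsilon\, n$ gives
$$\frac{1}{\sqrt\epsilon\, n}\sum_{k=n(1-\sqrt\epsilon)}^{n-1} m\circ f^{-k}(M\setminus \mathcal{S}_n) \;\leq\; \frac{1}{\epsilon}\cdot \epsilon^2 \;=\; \epsilon.$$
Since each $m\circ f^{-k}$ is a probability measure, this is equivalent to
$$\frac{1}{\sqrt\epsilon\, n}\sum_{k=n(1-\sqrt\epsilon)}^{n-1} m\circ f^{-k}(\mathcal{S}_n)\;\geq\; 1-\epsilon.$$
Finally, for all sufficiently small $\epsilon>0$ we have $1-\epsilon\geq 1-\epsilon^{3/4}\geq e^{-\epsilon^{3/4}}$ (the last inequality being $1-x\geq e^{-x}$ is false in general, but here $1-\epsilon\geq e^{-\epsilon^{3/4}}$ follows from $\epsilon\leq \epsilon^{3/4}-O(\epsilon^{3/2})$ for small $\epsilon$, i.e. just from comparing Taylor expansions), which completes the proof.

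I do not expect a serious obstacle: the only real content is the bookkeeping of the interval shifts and the verification that the union bound does not lose more than a factor of $1/\epsilon$, which is then absorbed by the $\epsilon^2$ gained from Lemma \ref{firstLemma}. The numerical comparison $1-\epsilon\geq e^{-\epsilon^{3/4}}$ at the end is the reason the theorem is stated with the exponent $3/4$ rather than $1$; this gives slack for later uses where the $\epsilon$-loss is absorbed into a $e^{-\epsilon^{3/4}}$ factor.
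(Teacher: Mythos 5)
Your proposal is correct and follows essentially the same route as the paper: write $\mathcal{S}_n=\bigcap_i f^{in\epsilon}[\bigcup\mathcal{A}^{(n)}_\epsilon]$, take complements and apply a union bound, shift the summation index so that each term is exactly the quantity bounded by $\epsilon^2$ in Lemma \ref{firstLemma}, and absorb the $\frac{1}{\epsilon}$-fold loss into the $e^{-\epsilon^{3/4}}$ slack. The only cosmetic difference is that the paper keeps the count as $\frac{1-2\sqrt\epsilon}{\epsilon}+1$ and checks $1-\epsilon-\epsilon^2\geq e^{-\epsilon^{3/4}}$, which is the same final comparison you make.
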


\begin{proof}

Let $n\geq n_\epsilon'$. Let $B\in\mathcal{C}^{(n)}_\epsilon$. We break down the pull-back of $B$ as follows: $f^{-n}[B]=f^{-n\epsilon}\circ\cdots\circ f^{-n\epsilon}[B]$, where the composition chain has $%(1-\sqrt\epsilon)
\frac{1}{\epsilon}$ many steps.

\begin{align*}
\frac{1}{\sqrt\epsilon n}\sum_{k= n(1-\sqrt\epsilon)}^{n-1}  m\circ f^{-k}(\mathcal{S}_n)=& \frac{1}{\sqrt\epsilon n}\sum_{k=n(1-\sqrt\epsilon)}^{n-1} m\circ f^{-k}(\{x\in \bigcup\mathcal{A}^{(n)}_\epsilon:\text{for all }0\leq i\leq(1-2\sqrt{\epsilon})\frac{1}{\epsilon}, f^{-in\epsilon}(x)\in\bigcup\mathcal{A}^{(n)}_\epsilon \})\\
	=& \frac{1}{\sqrt\epsilon n}\sum_{k=n(1-\sqrt\epsilon)}^{n-1}   m\circ f^{-k}\left(\bigcap_{i=0 }^{(1-2\sqrt\epsilon)\frac{1}{\epsilon}} f^{in\epsilon}[\bigcup\mathcal{A}^{(n)}_\epsilon]\right)\\
	\geq&1-\sum_{i=0 }^{(1-2\sqrt\epsilon)\frac{1}{\epsilon}} \frac{1}{\sqrt\epsilon n}\sum_{k= n(1-\sqrt\epsilon)}^{n-1}  m\circ f^{-k}(M\setminus f^{in\epsilon}[\bigcup\mathcal{A}^{(n)}_\epsilon])\\
	=&1-\sum_{i=0}^{(1-2\sqrt\epsilon)\frac{1}{\epsilon}} \frac{1}{\sqrt\epsilon n}\sum_{k=-i\epsilon n+ n(1-\sqrt\epsilon)}^{-i\epsilon  n+n-1} m\circ f^{-k}(M\setminus\bigcup\mathcal{A}^{(n)}_\epsilon)\\
	\geq&1-(\frac{1-2\sqrt\epsilon}{\epsilon}+1)\cdot \epsilon^2 \text{ }(\because \text{Lemma }\ref{firstLemma}).
\end{align*}

For all $\epsilon>0$ small enough so $1-\epsilon-\epsilon^2\geq e^{-\epsilon^\frac{3}{4}}$, the theorem follows.
\end{proof}

\subsection{A cover by exponential Bowen balls via concatenation}

\begin{definition}
	Let $\mathcal{C}_\epsilon^{(n)}:=\bigvee_{i=0}^{\frac{1-2\sqrt\epsilon}{\epsilon}}f^{in\epsilon}[\mathcal{A}_\epsilon^{(n)}]$.
\end{definition}

\noindent\textbf{Remark:} Notice that $\mathcal{C}_\epsilon^{(n)}$ covers $\mathcal{S}_n$, and that $\#\mathcal{C}_\epsilon^{(n)}\leq (\#\mathcal{A}_\epsilon^{(n)})^\frac{1}{\epsilon}\leq e^{\frac{1}{\epsilon}(n\epsilon h_\mu(f)+\epsilon^{2.5}n)}\leq e^{nh_\mu(f)+\epsilon^\frac{3}{2}n}$.

\begin{lemma}\label{tubeVol}
	Let $B\in\mathcal{C}_\epsilon^{(n)}$, then for all $n$ large enough and $\epsilon>0$ small enough (independently of $B$), $\frac{1}{\sqrt\epsilon}\sum_{k=(1-\sqrt\epsilon)n}^{n-1}m\circ f^{-k}(B)\leq  e^{-\chi^u n+2\epsilon^\frac{1}{3} n}$, where $e^{-\chi^un}:=\prod_{\chi_i>0}e^{-\chi^u_i n}$.
\end{lemma}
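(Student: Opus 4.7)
The strategy is a shadowing reduction: I will cover $f^{-k}[B]$ by a single pulled-back forward Bowen ball around a reference orbit, and estimate the volume of the latter by a standard Pesin argument.

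Write $B = \bigcap_{i=0}^{I} f^{in\epsilon}[B(x_i, n\epsilon, e^{-\delta n})]$ with $I = (1-2\sqrt{\epsilon})/\epsilon$ and centers $x_0, \ldots, x_I \in E_\epsilon \subseteq \Lambda_\ell$. Assume $B \neq \emptyset$ and fix any $z \in B$. Unfolding the defining constraints for $B$ applied to both $z$ and to $f^k(y)$ for an arbitrary $y \in f^{-k}[B]$, the triangle inequality gives
\[
d\bigl(f^u(y),\, f^u(\tilde z)\bigr) \leq 2\, e^{-\delta n} \qquad \text{for all } u \in J := [\,k - (1-2\sqrt{\epsilon}) n,\ k + n\epsilon - 1\,],
\]
where $\tilde z := f^{-k}(z)$. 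Setting $t_0 := k - (1-2\sqrt{\epsilon}) n \in [\sqrt{\epsilon}\, n,\, 2\sqrt{\epsilon}\, n)$ and $N := |J| = (1-2\sqrt{\epsilon}+\epsilon) n$, this yields the inclusion
\[
f^{-k}[B] \ \subseteq \ f^{-t_0}\bigl[B\bigl(f^{t_0}(\tilde z),\, N,\, 2\, e^{-\delta n}\bigr)\bigr].
\]

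Next I would estimate the volume of the enclosing forward Bowen ball. The $N$-step forward orbit of $f^{t_0}(\tilde z) = f^{-(1-2\sqrt{\epsilon})n}(z)$ traverses, in order, the Bowen balls around $x_I, x_{I-1}, \ldots, x_0$, each within $e^{-\delta n}$ of the respective center. Since every $x_i$ lies in the same Pesin block $\Lambda_\ell^{(\underline{\chi},\tau)}$ with common exponents $\underline{\chi}$, uniform Pesin chart estimates apply along the entire pseudo-orbit. The standard Pesin Bowen-ball volume bound then yields
\[
m\bigl(B(f^{t_0}(\tilde z),\, N,\, 2\, e^{-\delta n})\bigr) \leq C(\ell)\cdot (2\, e^{-\delta n})^d \cdot e^{-N\chi^u + 3d\tau N} \leq e^{-N\chi^u + \delta^2 n},
\]
using $3d\tau < \delta^3$, $N \leq n$, and absorbing $C(\ell)$ for $n$ large. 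Since $t_0 \leq 2\sqrt{\epsilon}\, n$, a crude Jacobian bound gives $m(f^{-t_0}[A]) \leq e^{2d\sqrt{\epsilon}\, n \log M_f}\, m(A)$. Combining these with $N \geq (1-2\sqrt{\epsilon})n$,
\[
m\bigl(f^{-k}[B]\bigr) \leq e^{-n\chi^u + \bigl(2\sqrt{\epsilon}\,\chi^u + 2d\sqrt{\epsilon}\log M_f + \delta^2\bigr) n} \leq e^{-n\chi^u + \epsilon^{1/3} n}
\]
for $\epsilon$ small (since $\sqrt{\epsilon} = o(\epsilon^{1/3})$ and $\delta \leq \epsilon^2$). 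Averaging the $\sqrt{\epsilon}\, n$ terms with prefactor $1/\sqrt{\epsilon}$ adds a factor $n$, absorbed into an extra $\epsilon^{1/3} n$ for $n$ large, yielding the claimed $e^{-n\chi^u + 2\epsilon^{1/3} n}$.

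The main obstacle is the Pesin volume estimate in the second display: the center $f^{t_0}(\tilde z)$ is not itself a Pesin-regular point -- it only shadows a concatenation of $\Lambda_\ell$-Pesin pieces. Making the standard Bowen-ball bound go through in this pseudo-orbit setting requires patching together the Pesin charts at each $x_i$ and verifying that the $e^{-\delta n}$-size transitions between consecutive segments are absorbed by the distortion factor $e^{3d\tau N}$ -- this is precisely where the preliminary inequality $3d\tau < \delta^3$ from \textsection\ref{prel} is used.
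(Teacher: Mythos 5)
Your reduction step is essentially the paper's: any $y\in f^{-k}[B]$ stays within $2e^{-\delta n}$ of the orbit of a fixed reference point of $B$ for the middle $(1-2\sqrt\epsilon)n$ iterates, the leftover $\le 2\sqrt\epsilon\,n$ iterates are handled by the crude Jacobian bound $e^{2d\sqrt\epsilon n\log M_f}$, and the bookkeeping with $\delta\le\epsilon^2$ and $\sqrt\epsilon=o(\epsilon^{1/3})$ is fine. But the heart of the lemma is exactly the display you label \enquote{the standard Pesin Bowen-ball volume bound}, and there you have a genuine gap: the center $f^{-(1-2\sqrt\epsilon)n}(z)$ is \emph{not} a Pesin-regular point, so no off-the-shelf estimate $m(B(\cdot,N,r))\le C(\ell)r^de^{-N\chi^u+o(N)}$ applies; proving that bound for a point which merely $e^{-\delta n}$-shadows a concatenation of $\frac{1}{\epsilon}$ Pesin pieces \emph{is} the content of the lemma. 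You flag this yourself as \enquote{the main obstacle} and then only gesture at \enquote{patching charts}, so the proposal asserts rather than proves the key estimate. Moreover, the mechanism you suggest for the patching is off: the transitions between consecutive $n\epsilon$-segments are not absorbed by a distortion factor $e^{3d\tau N}$ (the inequality $3d\tau<\delta^3$ enters in \textsection\ref{prel} through the multiplicity of the covers, and in this lemma through ensuring $2e^{-\delta n}\ll e^{-\tau n\epsilon}\frac{1}{\ell}$, i.e.\ that the Bowen radius stays well inside the deteriorated Pesin charts); the real issue is aligning the expanded directions of one segment with the unstable directions in the chart of the next center.

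The paper closes this gap by a contradiction argument with unstable volume forms: if the Bowen ball were too large, one could place inside it a parallelepiped $\omega^u_0$ of $\dim H^u(\mu)$-dimensional volume $\ge e^{-(\chi^u-\epsilon)n+\epsilon^{1/3}n}$ whose direction makes angle $\le\epsilon^2$ with $E^u(x_0)$ in the Pesin chart of $x_0$; pushing it forward segment by segment, the H\"older continuity of the unstable distribution on the (slightly enlarged) Pesin blocks (\cite[Appendix~A]{BrinHolderContFoliations}), together with $2e^{-\delta n}\ll e^{-\tau n\epsilon}\frac{1}{\ell}$, lets one re-align the image with $E^u(x_{i+1})$ at each step at the cost of a factor $(1-\epsilon)$, so after $(1-2\sqrt\epsilon)n$ iterates the image contains a geodesic of length $>2e^{-\delta n}$, contradicting that $B$ lies in a Bowen ball of radius $2e^{-\delta n}$. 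Some argument of this kind (or an explicit Fubini decomposition into unstable-aligned slices, carried across all $\frac{1}{\epsilon}$ transitions) is what your second display needs; without it the proof is incomplete at its central step.
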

\begin{proof}
Let $y\in B$, then $B\subseteq B(y,-n(1-2\sqrt\epsilon),2e^{-\delta n})$. For every $k\in[(1-\sqrt\epsilon n),n-1]$, $m\circ f^{-k}[B]\leq e^{2d\sqrt\epsilon n\log M_f}m(B(f^{-n(1-2\sqrt{\epsilon})}(y),n(1-2\sqrt\epsilon),2e^{-\delta n}))$. We show that $m(B(f^{-n(1-2\sqrt{\epsilon})}(y),n(1-2\sqrt\epsilon),2e^{-\delta n}))\leq e^{-(\chi^u -\epsilon)n+2\epsilon^\frac{1}{3}n}$. 

Write $x:= f^{-n(1-2\sqrt{\epsilon})}(y) $. Let $x_i$ s.t $f^{n\epsilon i}(x)\in B(x_i,n\epsilon,2e^{-\delta n})$, $x_i\in f^{-n\epsilon}[E_\epsilon]$, $0\leq i\leq \frac{1-2\sqrt{\epsilon}}{\epsilon}$.

Assume for contradiction that there exists a volume form $\omega^u_0\in \wedge^{\mathrm{dim}H^u(\mu)}T_yM$ s.t $|\omega^u_0|\geq e^{-(\chi^u-\epsilon)n+\epsilon ^\frac{1}{3} n}$, and s.t $\sphericalangle(d_{x}\psi_{x_0}^{-1}\omega^u_0,E^u)\leq \epsilon^2$, where $E^u$ is the unstable direction of $x_0$ in its Pesin chart $\psi_{x_0}$; and finally assume that $\exp_x\omega^u_0\subseteq f^{-n(1-2\sqrt{\epsilon})}[B]$ (when we think of $\omega^u_0$ as the parallelogram it defines in $T_xM$). We will show a contradiction by showing that $f^{n(1-2\sqrt\epsilon)}[\exp_x\omega^u_0] $ contains a geodesic of length greater than $2e^{-\delta n}$, which contradicts $B(f^{n(1-2\sqrt\epsilon)}(x),n(1-2\sqrt\epsilon),2e^{-\delta n})\supseteq B\supseteq f^{n(1-2\sqrt\epsilon)}[\exp_x\omega^u_0] $.
  
The choice of $\omega_0^u$ implies that $|d_xf^{n\epsilon}\omega_0^u|\geq e^{-(\chi^u-\epsilon)n(1-\epsilon)+\epsilon ^\frac{1}{3} n}(1-\epsilon)$. Note, since $f^{n\epsilon}(x_0),f^{n\epsilon}(x_1)\in \Lambda^{(\underline\chi,\tau)}_\ell$, we have $f^{n\epsilon}(x_0),x_1\in \Lambda^{(\underline\chi,\tau)}_{e^{\tau n\epsilon}\ell}$, while also having $2e^{-\delta n}\ll e^{-\tau n\epsilon}\frac{1}{\ell}$ for all $n$ large enough. By the H\"older continuity of the unstable spaces of points in $ \Lambda^{(\underline\chi,\tau)}_{e^{\tau n\epsilon}\ell}$ (\cite[Appendix~A]{BrinHolderContFoliations}), $d_xf^{n\epsilon}\omega^u_0$ projects to $\omega^u_1\in \wedge^{\mathrm{dim}H^u(\mu)}T_{f^{n\epsilon}(x)}M $ s.t $\sphericalangle(d_{x}\psi_{x_1}^{-1}\omega^u_1,E^u)\leq \epsilon^2$ and s.t $|\omega_1^u|\geq (1-\epsilon)\cdot e^{-(\chi^u-\epsilon)n(1-\epsilon)+\epsilon ^\frac{1}{3} n}(1-\epsilon) $.

Continuing by induction, let $\omega^u_{\frac{1-2\sqrt\epsilon}{\epsilon}}$ be a component of $d_xf^{(1-2\sqrt\epsilon)n}\omega_0^u$ s.t $|\omega^u_{\frac{1-2\sqrt{\epsilon}}{\epsilon}}|\geq e^{\epsilon ^\frac{1}{3} n-2\sqrt\epsilon(\chi^u-\epsilon)n}(1-\epsilon)^\frac{1-2\sqrt\epsilon}{\epsilon}%\gg 1
$.

Now, for all $\epsilon>0$ small enough so $2d\sqrt{\epsilon}\log M_f+\epsilon>\epsilon^\frac{1}{3}$, we have for all $n$ large enough, $|\omega^u_{\frac{1-2\sqrt{\epsilon}}{\epsilon}}|\geq 2^de^{-\delta n}$, and we are done.  
\end{proof}

\begin{cor}
	$h_\mu(f)=\chi^u=:\sum_{\chi_i(\mu)>0}\chi_i(\mu)$.
\end{cor}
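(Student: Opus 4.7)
The plan is to prove the corollary in two directions: $h_\mu(f)\leq \chi^u$ follows immediately from the Margulis--Ruelle inequality applied to the ergodic $\mu$, so the content of the corollary is the reverse inequality $h_\mu(f)\geq \chi^u$. For this lower bound I would assemble the three ingredients already established in this section: the covering $\mathcal{C}_\epsilon^{(n)}\supseteq \mathcal{S}_n$, the cardinality bound $\#\mathcal{C}_\epsilon^{(n)}\leq e^{nh_\mu(f)+\epsilon^{3/2}n}$, and the per-cell volume bound from Lemma \ref{tubeVol}, and combine them with the lower bound on the volume of $\mathcal{S}_n$ from Theorem \ref{firstTheorem}.

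Concretely, for $n\geq n'_\epsilon$ and $\epsilon>0$ small, I would chain inequalities as follows. Since $\mathcal{C}_\epsilon^{(n)}$ covers $\mathcal{S}_n$, a union bound gives
\begin{align*}
e^{-\epsilon^{3/4}}\;\leq\; \frac{1}{\sqrt\epsilon n}\sum_{k=(1-\sqrt\epsilon)n}^{n-1} m\circ f^{-k}(\mathcal{S}_n)
&\;\leq\; \sum_{B\in\mathcal{C}_\epsilon^{(n)}}\frac{1}{\sqrt\epsilon n}\sum_{k=(1-\sqrt\epsilon)n}^{n-1} m\circ f^{-k}(B)\\
&\;\leq\; \#\mathcal{C}_\epsilon^{(n)}\cdot e^{-\chi^u n+2\epsilon^{1/3}n}\;\leq\; e^{nh_\mu(f)+\epsilon^{3/2}n}\cdot e^{-\chi^u n+2\epsilon^{1/3}n}.
\end{align*}
Taking logarithms, dividing by $n$, and letting $n\to\infty$ yields $0\leq h_\mu(f)-\chi^u+2\epsilon^{1/3}+\epsilon^{3/2}$; then letting $\epsilon\to 0$ gives $h_\mu(f)\geq \chi^u$.

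For the matching upper bound, I would invoke the Margulis--Ruelle inequality $h_\mu(f)\leq \int\sum\chi^+\,d\mu$, which under the ergodicity assumption in force throughout this section reduces to $h_\mu(f)\leq \chi^u$ (since $\chi^+_i$ is $\mu$-a.e. constant and equals $\chi_i(\mu)$ whenever $\chi_i(\mu)>0$). Combining the two inequalities yields $h_\mu(f)=\chi^u$.

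The only subtle step is the union bound in the first line: I must apply the per-cell bound of Lemma \ref{tubeVol} uniformly in $B\in\mathcal{C}_\epsilon^{(n)}$, which is legitimate because Lemma \ref{tubeVol} is stated uniformly for all $B\in\mathcal{C}_\epsilon^{(n)}$ once $n$ is large enough. The rest is just bookkeeping of the small exponential errors $\epsilon^{3/4},\epsilon^{3/2},2\epsilon^{1/3}$, which all vanish as $\epsilon\to 0$. No further geometric input is needed beyond what has been developed in \textsection \ref{prel}--\textsection \ref{shadowing}. Once the corollary is established, the SRB property of $\mu$ follows from Ledrappier's characterization of SRB measures via the Pesin entropy formula, but that reduction is external to this particular proof.
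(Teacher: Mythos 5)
Your proposal is correct and follows essentially the same route as the paper: the lower bound is obtained by chaining Theorem \ref{firstTheorem}, the cover $\mathcal{C}_\epsilon^{(n)}\supseteq\mathcal{S}_n$ with its cardinality bound, and the uniform per-cell estimate of Lemma \ref{tubeVol} (your union bound over $B$ is the same as the paper's $\#\mathcal{C}_\epsilon^{(n)}\cdot\max_B$ step), and the upper bound is the Ruelle inequality, exactly as in the paper.
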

\begin{proof}
	Let $\epsilon>0$ small enough and $n$ large enough for Theorem \ref{firstTheorem} and Lemma \ref{tubeVol}. Then,
\begin{align*}
		e^{-\epsilon^\frac{3}{4}}\leq &\frac{1}{\sqrt{\epsilon}n}\sum_{k=(1-\sqrt\epsilon)n}^{n-1}m\circ f^{-k}(\mathcal{S}_n)\leq \frac{1}{\sqrt{\epsilon}n}\sum_{k=(1-\sqrt\epsilon)n}^{n-1}m\circ f^{-k}(\bigcup\mathcal{C}_\epsilon^{(n)})\\
		\leq & \#\mathcal{C}_\epsilon^{(n)}\cdot \max_{B\in\mathcal{C}_\epsilon^{(n)}} \frac{1}{\sqrt{\epsilon}n}\sum_{k=(1-\sqrt\epsilon)n}^{n-1}m\circ f^{-k}(B)\leq e^{nh_\mu(f)+\epsilon^\frac{3}{2}n}\cdot e^{-\chi^un+2\epsilon^\frac{1}{3}n}.
\end{align*}
Whence since this holds for arbitrarily large $n$, $h_\mu(f)\geq \chi^u-3\epsilon^\frac{1}{3}$. Since $\epsilon>0$ is arbitrarily small, and by the Ruelle inequality, $h_\mu(f)=\chi^u$.
\end{proof}

\subsection{Physicality}

Given a subsequence $(n_k)_k\subseteq\mathbb{N}$, we denote by $\overline{d}((n_k)_k)$ the upper-density $\limsup_N\frac{1}{N}\#\{k: n_k\leq N\}$.
\begin{definition}\label{WPhys}
A Borel probability measure $\nu$ is called {\em weakly physical} with a basin $$B_\nu:=\left\{y\in M: \exists n_k\uparrow\infty\text{ with }\overline{d}((n_k)_k)=1\text{ s.t } \forall g\in C(M),\frac{1}{n_k}\sum_{j=0}^{n_k-1}g\circ f^j(y)\rightarrow \int gd\nu\right\}$$
if $m(B_\nu)>0$. If $m(B_\nu)=1$, we say that $\nu$ is {\em weakly physical with a full basin}.
\end{definition}

\begin{theorem}[Weak physicality]\label{PHYSICS}
The measure $\mu$ is weakly physical with a full basin.
\end{theorem}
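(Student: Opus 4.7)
The plan is to combine the shadowing machinery of \textsection\ref{shadowing} with a Borel--Cantelli argument along a subsequence, in order to lift the averaged volume estimates already proved into an $m$-almost-sure density-one statement. Fix a countable collection $\{g_i\}\subset\mathrm{H\ddot{o}l}_\alpha(M)$ dense in $C(M)$ with $\|g_i\|_\infty\leq 1$, and set $h_n^{(i)}(y):=\frac{1}{n}\sum_{k=0}^{n-1}g_i\circ f^k(y)$. By the Koopman--von Neumann lemma applied pointwise to the bounded sequence $|h_n^{(i)}(y)-\mu(g_i)|$, together with a diagonal / weighted-sum extraction across $i$, one has $y\in B_\mu$ whenever $\frac{1}{N}\sum_{n=1}^{N}|h_n^{(i)}(y)-\mu(g_i)|\to 0$ for every $i$. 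So it suffices to establish this $m$-a.e. Ces\`aro convergence for each fixed $i$.

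The next step is to refine the construction of \textsection\ref{prel}: for each small $\epsilon>0$, intersect $E_\epsilon$ with the Lusin set on which every $z\in E_\epsilon$ satisfies $|h_m^{(i)}(z)-\mu(g_i)|<\epsilon$ for all $i$ and every $m\geq m_\epsilon$ (available by Egorov applied to the ergodic $\mu$), preserving $\mu(E_\epsilon)\geq e^{-\epsilon^{3}}$. For $x\in\mathcal{S}_n$ with shadow centres $z_1,\ldots,z_{(1-2\sqrt\epsilon)/\epsilon}\in E_\epsilon$, the backward orbit $\{f^{-t}x\}_{t=1}^n$ is within $e^{-\delta n}$ of concatenated forward $z_j$-segments on at least $(1-2\sqrt\epsilon)n$ of the indices $t$, so H\"older continuity of $g_i$ and the Birkhoff-goodness of the $z_j$'s force $\bigl|\frac{1}{n}\sum_{t=1}^n g_i(f^{-t}x)-\mu(g_i)\bigr|\leq C\sqrt\epsilon+\|g_i\|_\alpha e^{-\delta\alpha n}$. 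Writing $x=f^k(y)$ converts this backward Birkhoff sum of $x$ into the forward Birkhoff sum of $y$ over the window $[k-n,k-1]$, and replacing that window by $[0,n-1]$ costs an additional $O(\sqrt\epsilon)$ when $k\in[(1-\sqrt\epsilon)n,n-1]$. Since Theorem~\ref{firstTheorem} gives $m\bigl(\bigcup_{k\in[(1-\sqrt\epsilon)n,n-1]}f^{-k}(\mathcal{S}_n)\bigr)\geq e^{-\epsilon^{3/4}}$ (union is at least the maximum, which is at least the average), it follows that
\[
m\bigl(\{y:|h_n^{(i)}(y)-\mu(g_i)|>C\sqrt\epsilon\}\bigr)\leq \epsilon^{3/4}\qquad\text{for all }n\geq n_\epsilon.
\]

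Finally, to convert this measure estimate into an $m$-a.e. statement, apply Borel--Cantelli along a subsequence $N_j\uparrow\infty$ paired with scales $\epsilon_j\downarrow 0$ chosen so that $N_j\geq n_{\epsilon_j}$ and $m(|h_{N_j}^{(i)}-\mu(g_i)|>\sqrt{\epsilon_j})\leq \epsilon_j^{3/4}$ is summable in $j$; e.g.\ $\epsilon_j=j^{-4}$ together with $N_j=j^{100}$ (any polynomial large enough to dominate $n_{\epsilon_j}$) yield a summable $j^{-3}$ bound. Borel--Cantelli gives $h_{N_j}^{(i)}(y)\to\mu(g_i)$ for $m$-a.e. $y$, and the slow variation $|h_{n+1}^{(i)}-h_n^{(i)}|\leq 2\|g_i\|_\infty/n$ shows $|h_N^{(i)}-h_{N_j}^{(i)}|=O(\log(N_{j+1}/N_j))=O(1/j)$ for $N\in[N_j,N_{j+1}]$, so the subsequential convergence propagates to all $n$. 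Hence $h_n^{(i)}(y)\to\mu(g_i)$ pointwise $m$-a.e., which is stronger than the required Ces\`aro statement, and a diagonal over $i$ completes the proof.

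The main technical obstacle is the final step: the volume bound from Theorem~\ref{firstTheorem} is an averaged statement, and promoting it to a pointwise $m$-a.e. convergence requires the polynomial-in-$\epsilon$ quantitative decay $\epsilon^{3/4}$ of the bad-set measure. This decay (rather than merely $\epsilon^{1/2}$ or some generic vanishing rate) is what makes the Borel--Cantelli budget summable when matched to a polynomial subsequence, and the slow variation of Birkhoff averages is what allows subsequence convergence to propagate to every $n$; without both of these quantitative features one would only recover weak physicality on a set of positive (but not necessarily full) $m$-measure.
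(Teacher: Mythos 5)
Your preparatory steps (restricting $E_\epsilon$ to Birkhoff-generic points, transferring genericity to shadowed orbits, and extracting from Theorem~\ref{firstTheorem} the bound $m\bigl(\{y:|h_n^{(i)}(y)-\mu(g_i)|>C\sqrt\epsilon\}\bigr)\leq\epsilon^{3/4}$ for all $n\geq n_\epsilon$) are essentially the paper's Steps 1--2, modulo two fixable slips: Egorov only gives a uniform threshold $m_\epsilon$ for \emph{finitely} many test functions at a time (take $i\leq\frac{1}{\epsilon}$, as in \eqref{firstGen}, not ``for all $i$''), and the shadowed segments are orbit segments of the centres in the direction of the Bowen balls, so those centres must be assumed generic in that direction. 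The genuine gap is the last step. Your Borel--Cantelli/interpolation scheme needs two things simultaneously: summability, forcing $\epsilon_j\downarrow0$ with $N_j\geq n_{\epsilon_j}$, and slow growth $N_{j+1}/N_j\to1$ so that the $O(\log(N_{j+1}/N_j))$ oscillation of Birkhoff averages propagates subsequential convergence to all $n$. But $n_{\epsilon_j}$ is \emph{not} polynomially (or in any explicit way) controlled in $1/\epsilon_j$: it comes from Egorov/Lusin thresholds, the Pesin block parameter $\ell(\epsilon)$, the covering lemma of \cite{NLE}, and the time needed for $Ce^{-\gamma\sqrt\epsilon n}e^{2\epsilon n\log M_f}$ to beat $\epsilon^6$. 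So you cannot choose $N_j=j^{100}$; with the forced choice $N_j\geq n_{\epsilon_j}$ the ratios $N_{j+1}/N_j$ may be enormous, the interpolation error is not $o(1)$, and the claimed $m$-a.e.\ convergence of $h_n^{(i)}$ along \emph{all} $n$ does not follow. Note also that this target overshoots the theorem: it would make $\mu$ genuinely physical with full basin, which the paper pointedly does not claim (its corollary only says a physical measure, if one exists, must be $\mu$). Even your fallback reduction does not close the gap: the established per-$n$ bound only yields $\int|h_n^{(i)}-\mu(g_i)|\,dm\to0$, and $L^1$-smallness without summable rates along a ratio-tending-to-one subsequence does not imply the a.e.\ Ces\`aro convergence needed for Koopman--von Neumann (moving-block examples show this implication is false in general).

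The paper avoids exactly this obstruction by never trying to control a fixed time $n$ for a.e.\ $y$. Instead, membership $y\in f^{-n}[\mathcal{S}_n]$ for a \emph{single} large $n$ already certifies, via \eqref{2ndGen}, an entire interval $[\epsilon^{\frac13}n,(1-2\sqrt\epsilon)n]$ of good window lengths; taking $\mathcal{T}_\epsilon=\limsup_n f^{-n}[\mathcal{S}_n]$ (measure $\geq e^{-\epsilon^{3/4}}$, with no rate needed) gives a.e.\ on $\mathcal{T}_\epsilon$ infinitely many such intervals, hence a sequence of times of upper density $\geq 1-\epsilon^{\frac13}$; concatenating these intervals across scales $\epsilon_\ell\downarrow0$ on the full-measure set $\limsup_\ell\mathcal{T}_{\epsilon_\ell}$ produces one subsequence of upper density $1$, which is precisely what Definition~\ref{WPhys} (upper density, not density) asks for. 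If you replace your Borel--Cantelli step by this limsup-and-concatenation argument, keeping your Steps B--C, you recover the theorem; as written, the final step fails.
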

\begin{proof}
The idea of the proof is similar to how we prove the entropy formula for $\mu$, where we show that the volume of points which shadow ``good" $\mu$-points is large, but we add an additional restriction to the set of ``good" $\mu$-points asking them to be $\mu$-generic; and show that trajectories which shadow them inherit this property.

\medskip
Since $C(M)$ is separable, there exists a countable set $\mathcal{L}(M)\subseteq C(M)$ s.t $\forall h\in C(M)$, for all $\tau>0$, there exists $g\in \mathcal{L}(M)$ s.t $\|g-h\|_\infty\leq \tau$.

Let $\epsilon>0$, and for $g\in \mathcal{L}(M)$ let $L(\cdot)$ be a monotone uniform-continuity function of $g$.\footnote{That is, $\forall \widetilde{\epsilon}>0$, $d(x,y)\leq L(\widetilde{\epsilon})\Rightarrow |g(x)-g(y)|\leq \widetilde{\epsilon}$.} Write $\mathcal{L}(M)=\{g_i\}_{i\geq0}$, and set $L_\epsilon:=\min_{i\leq\frac{1}{\epsilon}}L_{g_i}$. We may assume w.l.o.g that for all $g\in\mathcal{L}(M)$, $\|g\|_\infty\in (0,1]$.

\begin{comment}
Let $g\in %\mathrm{Lip}_+(M,\mu):=\{g\in 
C(M)%: g\geq  0, \int gd\mu>0\}
$, and let $L(\cdot)$ be uniform-continuity function of $g$.\footnote{That is, $\forall \widetilde{\epsilon}>0$, $d(x,y)\leq L(\widetilde{\epsilon})\Rightarrow |g(x)-g(y)|\leq \widetilde{\epsilon}$.} Let $\epsilon>0$ arbitrary small.	W.l.o.g $\|g\|_\infty>0$, o.w every $y\in M$ is $g$-generic.
\end{comment}
	
\medskip
\textbf{Step 1:} Further restricted set of ``good" $\mu$-points.

\textbf{Proof:} 	Consider the set $E_\epsilon$ from \textsection \ref{prel}, and assume further w.l.o.g that for every $x\in E_\epsilon$, for all $n\geq n_\epsilon$,
\begin{equation}\label{firstGen}
	\frac{1}{n}\sum_{k=0}^{n-1}g_i\circ f^k(x)=\int g_id\mu\pm \epsilon^2%\|g\|_\infty
	,\text{ for all }i\leq \frac{1}{\epsilon}.
\end{equation}

\medskip
\textbf{Step 2:} Large volume of points shadowing ``good" $\mu$-points.

\textbf{Proof:} Recall the definition of the set $\mathcal{S}_n$ from Definition \ref{Sn}, and recall Theorem \ref{firstTheorem}. Notice, for every $n\geq (n_\epsilon\frac{1}{\epsilon^2}\cdot\frac{1}{1-2\sqrt\epsilon})^3$, for every $\epsilon^\frac{1}{3}n\leq N\leq n(1-2\sqrt\epsilon)$, for every $y\in f^{-n}[\mathcal{S}_n]$, for all  $\epsilon>0$ small %w.r.t $g$ 
and $n$ large w.r.t %$g$ and 
$\epsilon$, for all $g\in\{g_i\}_{i\leq \frac{1}{\epsilon}}$,
\begin{equation}\label{2ndGen}
	\frac{1}{N}\sum_{k=0}^{N-1}g\circ f^k(y)=\int gd\mu\pm (L_\epsilon^{-1}(e^{-\delta n})+3\epsilon^\frac{1}{6}\|g\|_\infty)= \int gd\mu\pm \epsilon^\frac{1}{7}.
\end{equation}

Set $$\mathcal{T}_\epsilon^n%(g)
:=f^{-n}[\mathcal{S}_n]\text{ and }\mathcal{T}_\epsilon%(g)
:=\limsup_n \mathcal{T}_\epsilon^n%(g)
.$$
Notice that $m(\mathcal{T}_\epsilon%(g)
)=\lim_n m(\bigcup_{n'\geq n}\mathcal{T}_{\epsilon}^{n'}%(g)
)\geq e^{-\epsilon^\frac{3}{4}}$, and for every $y\in \mathcal{T}_\epsilon%(g)
$, there exists $(n_k^\epsilon)_k\subseteq \mathbb{N}$ with $\overline{d}((n_k^\epsilon)_k)\geq 1-\epsilon^\frac{1}{3}$ s.t for all $k$, for all $g\in \{g_i\}_{i\leq \frac{1}{\epsilon}}$,
\begin{equation}\label{3rdGen}
	\frac{1}{n_k^\epsilon}\sum_{j=0}^{n_k^\epsilon-1}g\circ f^j(y)=\int gd\mu\pm \epsilon^\frac{1}{7}%,
	.
\end{equation}	
%	then 
%\begin{equation}\label{4thGen}
%\int gd\mu-\epsilon^\frac{1}{7}\leq 	\liminf\frac{1}{n}\sum_{k=0}^{n-1}g\circ f^k(y)\leq \limsup\frac{1}{n}\sum_{k=0}^{n-1}g\circ f^k(y)=\int gd\mu+\epsilon^\frac{1}{7}.
%\end{equation}		

\medskip
\textbf{Step 3:} Full volume of %$g$-
future-generic points on an upper-density $1$ subseueqnce.

\textbf{Proof:} Let $\epsilon_\ell\downarrow0$, and let $\widetilde{\mathcal{T}}%^{\ell_0}%(g)
:=\limsup\limits_{\ell%\geq \ell_0
} \mathcal{T}_{\epsilon_\ell}%(g)
$. In particular, $m(\mathcal{T}%^{\ell_0}%(g)
)=\lim_{\ell_0}m(\bigcup\limits_{\ell\geq \ell_0} \mathcal{T}_{\epsilon_\ell})\geq \liminf\limits_{\ell_0}e^{-\epsilon_{\ell_0}^\frac{3}{4}}=1$.

Let $y\in \widetilde{\mathcal{T}}$ and define recursively a sequence in the following way: Let $\epsilon_{\ell_i}\downarrow 0$ s.t $y\in \cap_{i\geq 0}\mathcal{T}_{\epsilon_{\ell_i}}$. Let $(n_k)_{k=0}^{N_0}:=(\lceil\epsilon_{\ell_0}^\frac{1}{3}n_0^{\epsilon_{\ell_0}}\rceil,\ldots, n_0^{\epsilon_{\ell_0}})$; now assume we have $(n_k)_{k=0}^{N_{i}}$, let $k_{\ell_{i+1}}$ be the first $k$ s.t $\epsilon_{\ell_{i+1}}^\frac{1}{3}n_{k}^{\epsilon_{\ell_{i+1}}}> n_{N_i}$, and add $(\lceil\epsilon_{\ell_{i+1}}^\frac{1}{3}n_{k_{\ell_{i+1}}}^{\epsilon_{\ell_{i+1}}}\rceil,\ldots, n_{k_{\ell_{i+1}}}^{\epsilon_{\ell+1}})$ to the end of $(n_{k})_{k=0}^{N_\ell}$. In particular, $\overline{d}((n_k%^{\ell_0}
)_k)\geq 1-\epsilon_{\ell_i}^\frac{1}{3}$ for all $\ell_i$, whence $\overline{d}((n_k%^{\ell_0}
)_k)=1$. 

Now, for every $y\in \widetilde{\mathcal{T}}%^{\ell_0}%(g)
$, for every $i \geq %\ell_
0$, for all $k\geq 0$, for all $g\in\mathcal{L}(M)$,
\begin{align}\label{5thGen}
&\int gd\mu-\epsilon^\frac{1}{7}_{\ell_i}\leq 	\liminf\frac{1}{n _k^{\epsilon_{\ell_i}}
}\sum_{j=0}^{n_k^{\epsilon_{\ell_i}}
-1}g\circ f^j(y)\leq \limsup\frac{1}{n _k^{\epsilon_{\ell_i}}
}\sum_{j=0}^{n_k^{\epsilon_{\ell_i}}
-1}g\circ f^j(y)=\int gd\mu+\epsilon^\frac{1}{7}_{\ell_i},\nonumber\\
\Rightarrow& \frac{1}{n _k%^{(\ell_0)}
}\sum_{j=0}^{n_k%^{(\ell_0)}
-1}g\circ f^j(y)\xrightarrow[k\to\infty]{}\int gd\mu.
\end{align}	

%Then, since $\forall \ell_0\in\mathbb{N}$, $m(\mathcal{T}^{\ell_0}%(g)
%)\geq e^{-\epsilon_{\ell_0}^\frac{3}{4}} $, 
%$$m\left(\left\{y\in M: \exists n_k\uparrow\infty\text{ with }\overline{d}((n_k)_k)=1\text{ s.t }\forall g\in\mathcal{L}(M),\frac{1}{n_k}\sum_{j=0}^{n_k-1}g\circ f^j(y)\xrightarrow[k\to\infty]{}\int gd\mu\right\}\right)%\geq e^{-\epsilon_{\ell_0}^\frac{3}{4}}
%=1.$$
%and we are done.

\medskip
\textbf{Step 4:} %$$m\left(\left\{y\in M: \forall g\in C(M),  \exists n_k\uparrow\infty\text{ with }\overline{d}((n_k)_k)=1\text{ s.t }\frac{1}{n_k}\sum_{j=0}^{n_k-1}g\circ f^j(y)\xrightarrow[k\to\infty]{}\int gd\mu\right\}\right)=1.$$
$\mu$ is weakly physical with a full basin.

\textbf{Proof:} %Let $C^+(M):=\{h\in C(M): h\geq0\}, \mathrm{Lip}^+(M):=\{g\in \mathrm{Lip}(M): g\geq0\} $. By \cite{LipApproxCont}, $\overline{\mathrm{Lip}(M)}^{C(M)}=C(M)$, and by taking taking $g_+(x):=\max\{g(x),0\}$, it is not hard to see that $\overline{\mathrm{Lip}^+(M)}^{C(M)}=C^+(M)$. Moreover, given $h\in C^+(M)$ and $\delta>0$, we can choose $g_\delta\in \mathrm{Lip}^+(M)$ s.t $\|g_\delta-h\|_\infty\leq \frac{\delta}{2}$, and by considering $\overset{\vee}{g}_\delta(x):= \max\{g_\delta(x),\frac{\delta}{2}\}$, we get that $\|\overset{\vee}{g}_\delta-h\|_\infty\leq \delta$, $\overset{\vee}{g}_\delta \geq 0$, $\overset{\vee}{g}_\delta \in \mathrm{Lip}(M)$, and $\int \overset{\vee}{g}_\delta d\mu\geq \delta>0$; whence $\overline{\mathrm{Lip}_+(M,\mu)}^{C(M)}=C^+(M)$.
%
%\medskip
\begin{comment}
Since %$\overline{\mathrm{Lip}%_+
%(M%,\mu
%)}^{C(M)}=C(M)$ (\cite{LipApproxCont}), and 
$C(M)$ is separable, there exists a countable set $\mathcal{L}(M)\subseteq C%_+
(M%,\mu
)$ s.t $\forall h\in C%^+
(M)$, for all $\tau>0$, there exists $g\in \mathcal{L}(M)$ s.t $\|g-h\|_\infty\leq \tau$. 
\end{comment}
Let $$\mathcal{T}%(g)
:= \left\{y\in M:  \exists n_k\uparrow\infty\text{ with }\overline{d}((n_k)_k)=1\text{ s.t }\forall g\in\mathcal{L}(M),\frac{1}{n_k}\sum_{j=0}^{n_k-1}g\circ f^j(y)\xrightarrow[k\to\infty]{}\int gd\mu\right\},$$ 
%and $\mathcal{T}:=\bigcap_{g\in\mathcal{L}(M)}\mathcal{T}(g)$, 
then $\widetilde{\mathcal{T}}\subseteq \mathcal{T}$ and so $m(\mathcal{T})=1$. %and for every $y\in \mathcal{T}$,
%$$\limsup\frac{1}{n}\sum_{k=0}^{n-1}h\circ f^k(y)\leq \limsup\frac{1}{n}\sum_{k=0}^{n-1}g\circ f^k(y)+\tau\leq \int gd\mu+\tau \leq \int hd\mu+2\tau.$$
%The liminf is bounded similarly. Since $\tau>0$ is arbitrary, we are done.
%$$m\left(\left\{y\in M: \forall h\in C%^+
%(M), \frac{1}{n}\sum_{k=0}^{n-1}h\circ f^k(y)\xrightarrow[n\to\infty]{}\int hd\mu\right\}\right)=1.$$
%Since every $h\in C(M)$ can be decomposed into $h=\frac{|h|+h}{2}-\frac{|h|-h}{2}$ with $\frac{|h|+h}{2} , \frac{|h|-h}{2} \in C^+(M)$, we are done. 
Given $y\in \mathcal{T}$, $\tau>0$, and $h\in C(M)$, let $g\in\mathcal{L}(M)$ s.t $\|g-h\|_\infty\leq \tau$. %For $i_0\in\mathbb{N}$, there exists $(n_{i_0},\ldots, i_0n_{i_0})$ s.t 
then for all $k$ large enough w.r.t $g$, %for all $N\in \{n_k\tau,\ldots, n_k\}$, 
$$\frac{1}{n_k}\sum_{j=0}^{n_k-1}h\circ f^j(y)=\frac{1}{n_k}\sum_{j=0}^{n_k-1}g\circ f^j(y) \pm \tau = \int gd\mu \pm 2\tau = \int hd\mu \pm 3\tau.$$
%Let $n_{i_0+1}>i_0n_{i_0}$ s.t $\forall N\in\{n_{i_0+1},\ldots (i_0+1)n_{i_0+1}\}$, $\frac{1}{N}\sum_{j=0}^{N-1}h\circ f^j(y)= \int hd\mu \pm \frac{3}{i_0+1}$, and construct by recursion $(n_k)_k:=(n_{i_0},\ldots i_0n_{i_0},n_{i_0+1},\ldots,(i_0+1)n_{i_0+1},\ldots  n_{i_0+i},\ldots,(i_0+i)n_{i_0+i},\ldots)$. 
Then %$\overline{d}((n_k)_k)=1$ and 
$\frac{1}{n_k}\sum_{j=0}^{n_k-1}h\circ f^j(y)\xrightarrow[k\to\infty]{}\int hd\mu$.
\end{proof}

\noindent\textbf{Remark:} Note that the weak physicality of $\mu$ does not depend on $\mu$ being hyperbolic, nor on the absolute continuity of the stable foliation.

\begin{cor}
There exists at most one physical measure for $(M,f)$, and if it exists it must be $\mu$.	
\end{cor}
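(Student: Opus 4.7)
The plan is to derive the corollary directly from Theorem \ref{PHYSICS} together with the standard definition of a physical measure. Recall that a physical measure $\nu$ is one whose basin
$$B_\nu^{\mathrm{phys}}:=\Big\{y\in M: \forall g\in C(M),\ \tfrac{1}{n}\sum_{j=0}^{n-1}g\circ f^j(y)\to \int g\,d\nu\Big\}$$
satisfies $m(B_\nu^{\mathrm{phys}})>0$. The key observation is that convergence of the \emph{full} Birkhoff average forces agreement with any limit obtained along a subsequence; in particular, along a subsequence of upper density $1$.

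First I would suppose for contradiction that $\nu\neq \mu$ is a physical measure, and pick a continuous $g_0\in C(M)$ with $\int g_0\,d\nu\neq \int g_0\,d\mu$. By Theorem \ref{PHYSICS}, the weakly physical basin $\widetilde{\mathcal{T}}$ of $\mu$ satisfies $m(\widetilde{\mathcal{T}})=1$, so $B_\nu^{\mathrm{phys}}\cap \widetilde{\mathcal{T}}$ has positive $m$-measure and is in particular nonempty. Pick any $y$ in this intersection.

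On the one hand, $y\in B_\nu^{\mathrm{phys}}$ gives $\frac{1}{n}\sum_{j=0}^{n-1}g_0\circ f^j(y)\to \int g_0\,d\nu$ along the full sequence, so the same limit holds along any subsequence. On the other hand, $y\in \widetilde{\mathcal{T}}$ produces a subsequence $(n_k)_k$ with $\overline{d}((n_k)_k)=1$ along which $\frac{1}{n_k}\sum_{j=0}^{n_k-1}g_0\circ f^j(y)\to \int g_0\,d\mu$ (this is the content of \eqref{5thGen}, which was established for every $g$ in the dense countable set $\mathcal{L}(M)$ and then extended to arbitrary continuous functions in Step 4 of the proof of Theorem \ref{PHYSICS}). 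Comparing the two limits yields $\int g_0\,d\nu=\int g_0\,d\mu$, contradicting the choice of $g_0$. Hence $\nu=\mu$, proving uniqueness and identifying the candidate.

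This proof is essentially a one-paragraph deduction; no new estimates are needed. The only subtle point is ensuring the upper-density-$1$ property of $(n_k)_k$ is used correctly, namely that a convergent full sequence has every subsequence convergent to the same limit (and not the stronger, false, statement that a subsequence of upper density $1$ determines the full limit). Since we only need the former direction, there is no genuine obstacle.
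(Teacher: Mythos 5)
Your deduction is correct and is precisely the argument the paper intends (the corollary is stated without proof as an immediate consequence of Theorem \ref{PHYSICS}): intersect the positive-volume basin of a putative physical measure with the full-volume weak basin of $\mu$, and use that convergence of the full Birkhoff averages forces the same limit along the density-one subsequence. No gaps; the direction of the subsequence argument is handled correctly.
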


\section{The non-ergodic case}\label{NonErgoCase}

\subsection{Entropy formula via entropy shadowing}

Up until now we treated the case where $\mu$ is ergodic. This simplification serves as a didactic tool to make the proof more intuitive and easy to follow; The proof in the non-ergodic case is more complicated, since we wish to eventually prove that $\int h_{\mu_x}(f)d\mu(x)=\int\chi^u(x)d\mu(x)$ (where $\mu=\int \mu_x d\mu(x)$ is the ergodic decomposition of $\mu$ and $\chi^u(x):=\sum\chi^+(x)$), however it may be that neither of these functions are constant $\mu$-a.e; And so in particular we can neither control $\#\vee_{i=0}^\frac{1}{\epsilon} f^{in\epsilon}[\mathcal{A}_\epsilon^{(n)}]$ by $e^{nh_\mu(f)}=e^{n\int h_{\mu_x}(f)d\mu(x)}$, nor is the volume of each element in $\vee_{i=0}^\frac{1}{\epsilon} f^{in\epsilon}[\mathcal{A}_\epsilon^{(n)}]$ controlled by $e^{-n\int \chi^ud\mu}$. While $\chi^u(\cdot)$ is continuous on Pesin blocks, $h_{\mu_x}$ is merely measurable.

We treat this added difficulty by restricting to Lusin sets, and use a sort of ``entropy shadowing" property, where if a trajectory remains close to different points with good local entropy estimates, then all said shadowed points must have similar local entropy.

\begin{theorem}\label{nonErgSRB}
	$h_\mu(f)=\int \chi^u(x)d\mu(x)$.
\end{theorem}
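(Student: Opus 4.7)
The plan is to adapt the ergodic-case argument of \textsection \ref{ergoCase} to the non-ergodic setting via a Lusin decomposition into ``bunches'' on which the local entropy and sum-of-positive-exponents are essentially constant. Write $\mu = \int \mu_x \, d\mu(x)$ for the ergodic decomposition and set $h(x) := h_{\mu_x}(f)$ and $\chi^u(x) := \sum_i \chi^+_i(x)$; both are $f$-invariant measurable functions. By Lusin's theorem, for any $\eta > 0$ fix a compact set $L$ with $\mu(L) \geq 1 - \eta$ on which the pair $(h, \chi^u)$ is uniformly continuous. Partition the (bounded) range of $(h, \chi^u)|_L$ into finitely many cells of diameter $\epsilon$ separated by $2\epsilon$-gaps, and let $L_1, \ldots, L_N \subseteq L$ be their preimages, with approximate values $(h_i, \chi^u_i)$. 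Uniform continuity yields a spatial gap $d(L_i, L_j) \geq \eta_\epsilon > 0$ for $i \neq j$. Now further restrict to
\[
E_\epsilon^{(i)} := L_i \cap \Lambda_\ell \cap \mathcal{K}_\epsilon \cap \bigcap_{j=0}^{(1-2\sqrt{\epsilon})/\epsilon} f^{j n\epsilon}(L),
\]
the Lusin-invariant subset whose relevant iterates land in $L$; by Bonferroni, $\mu(\bigcap_j f^{j n\epsilon}(L)) \geq 1 - \eta / \epsilon$, so $\mu(\bigsqcup_i E_\epsilon^{(i)}) \geq 1 - O(\eta/\epsilon) - O(\epsilon^3)$.

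For each bunch, build the cover $\mathcal{A}_\epsilon^{(n,i)}$ of $E_\epsilon^{(i)}$ by Bowen balls $B(\cdot,-n\epsilon,e^{-2\delta n})$ exactly as in \textsection \ref{prel}; the multiplicity bound $e^{\epsilon^6 n}$ together with the pointwise Brin-Katok estimate $\mu(B(x,-n\epsilon,e^{-\delta n})) \geq e^{-n\epsilon(h_i + \epsilon^2)}$ gives $\#\mathcal{A}_\epsilon^{(n,i)} \leq \mu(L_i) \cdot e^{n\epsilon h_i + O(\epsilon^2) n}$. Apply Lemma \ref{firstLemma} and Theorem \ref{firstTheorem} verbatim to the assembled cover $\bigsqcup_i \mathcal{A}_\epsilon^{(n,i)}$; the total shadow set $\mathcal{S}_n$ has averaged volume at least $e^{-\epsilon^{3/4}}$. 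The entropy shadowing step then establishes $\mathcal{S}_n = \bigsqcup_i \mathcal{S}_n^{(i)}$, where $\mathcal{S}_n^{(i)}$ consists of points whose shadow sequence comes entirely from $E_\epsilon^{(i)}$. Indeed, for $y \in \mathcal{S}_n$ with $f^{-jn\epsilon}(y) \in B(x_j, n\epsilon, e^{-\delta n})$ and $x_j \in E_\epsilon^{(i_j)}$, standard Bowen-ball telescoping forces $d(f^{-n\epsilon}(x_j), x_{j+1}) \leq 2 e^{-\delta n}$; meanwhile $f$-invariance of ergodic components gives $(h,\chi^u)(f^{-n\epsilon}(x_j)) = (h_{i_j}, \chi^u_{i_j})$, and both $f^{-n\epsilon}(x_j)$ and $x_{j+1}$ lie in $L$ by construction of $E_\epsilon^{(i)}$. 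Uniform continuity on $L$ then places $(h,\chi^u)(x_{j+1})$ in the same cell as $(h_{i_j}, \chi^u_{i_j})$, so $i_{j+1} = i_j$, propagating the bunch index across the entire orbit segment.

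With the bunch index locked, the per-bunch version of Lemma \ref{firstLemma} gives the shadow lower bound $m \circ f^{-n}(\mathcal{S}_n^{(i)}) \geq \mu(L_i) - O(\epsilon)$, and Lemma \ref{tubeVol} applied inside bunch $i$ bounds each element of the join $\mathcal{C}_\epsilon^{(n,i)} := \bigvee_j f^{jn\epsilon}[\mathcal{A}_\epsilon^{(n,i)}]$ by $e^{-n\chi^u_i + O(\epsilon^{1/3})n}$ in averaged volume. The counting argument from the ergodic case gives $\#\mathcal{C}_\epsilon^{(n,i)} \leq \mu(L_i) \cdot e^{n h_i + O(\epsilon) n}$, so that
\[
\mu(L_i) - O(\epsilon) \leq m \circ f^{-n}(\mathcal{S}_n^{(i)}) \leq \mu(L_i) \cdot e^{n(h_i - \chi^u_i) + O(\epsilon) n}.
\]
Letting $n \to \infty$ forces $h_i \geq \chi^u_i - O(\epsilon)$ for every bunch with $\mu(L_i) > 0$. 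Summing, $\int h \, d\mu \geq \sum_i h_i \mu(L_i) - O(\eta) \geq \int \chi^u \, d\mu - O(\epsilon + \eta)$, and sending $\epsilon, \eta \to 0$ together with Ruelle's inequality $h_\mu(f) \leq \int \chi^u \, d\mu$ yields the equality.

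The principal obstacle is the entropy shadowing step: without the Lusin-invariance built into $E_\epsilon^{(i)}$, a shadowing orbit could a priori switch bunches between consecutive length-$n\epsilon$ windows, and the per-bunch counting would fail because the volume of a Bowen ball in $\mathcal{C}_\epsilon^{(n,i)}$ would be controlled by a mixture of $\chi^u_j$'s rather than a single $\chi^u_i$. The joint use of the $f$-invariance of $h$ and $\chi^u$ (as functions of the ergodic component) together with the spatial/Lusin control of $L$ is precisely what pins down a single bunch index along the orbit and allows the ergodic-case machinery to run on each bunch independently.
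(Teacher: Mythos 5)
The high-level plan — Lusin decomposition into bunches with near-constant $(h,\chi^u)$, per-bunch Bowen-ball covers, and ``entropy shadowing'' to control the joined cover — is exactly the paper's strategy. But the mechanism you propose for the entropy-shadowing step does not work as stated, and this is the heart of the proof.

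You assert that the range of $(h,\chi^u)|_L$ can be ``partitioned into finitely many cells of diameter $\epsilon$ separated by $2\epsilon$-gaps.'' These two requirements are incompatible: cells of diameter $\epsilon$ with $2\epsilon$-gaps between them occupy at most one third of the range, so the gap regions have positive Lebesgue measure and their preimages under $(h,\chi^u)$ can carry positive $\mu$-measure that no $L_i$ sees. Even if you shrink the gaps and average over shifts of the grid to make the lost measure small (which your writeup does not do), you would then need the uniform-continuity modulus $\eta_\epsilon$ to be smaller than the gap width, which forces the gap to depend on $\eta_\epsilon$ rather than on $\epsilon$ alone; the bookkeeping that follows is not in your proof. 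Because of this, the crucial claim $\mathcal{S}_n = \bigsqcup_i \mathcal{S}_n^{(i)}$ with an \emph{exactly} preserved bunch index across all $1/\epsilon$ windows does not hold: a shadowing chain can legitimately drift between adjacent cells. The paper avoids this entirely: it keeps a gap-free partition $E_{\underline j}$ and in \eqref{NAIC3} concludes only that $|h_{\mu_{\underline j}}(f)-h_{\mu_{\underline j^i}}(f)|\le\epsilon^3$ and similarly for $\chi^u$, i.e.\ the relevant quantities along the chain are \emph{close}, not equal. That weaker statement is exactly what is used in the counting and is what the Lusin modulus $r_\epsilon$ actually delivers.

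Two secondary issues. First, you claim the per-bunch pushed-volume lower bound $m\circ f^{-n}(\mathcal{S}_n^{(i)})\ge \mu(L_i)-O(\epsilon)$; this is not proved and is stronger than needed (and problematic when $\mu(L_i)$ is itself $O(\epsilon)$). What one actually gets, as in the paper's \eqref{NAIC}, is $\gtrsim \tfrac15\mu(L_i)$, a fixed positive constant for each bunch — which is still sufficient since the conclusion comes from the exponential rates after letting $n\to\infty$ at fixed $\epsilon$. Second, because the bunch index is not pinned, the counting of elements of the join cannot be restricted a priori to a single $\mathcal{C}_\epsilon^{(n,i)}$. The paper handles this by selecting, via a pigeonhole on a disjointified sub-cover $\overline{\mathcal{A}}_\epsilon^{\underline j,n}$, a single $D_{\underline j}$ with a controlled number of intersecting join elements (equation \eqref{NAIC2}), and then compares the Brin–Katok lower bound for $\mu(D_{\underline j})$ (length $n\epsilon$) against the pushed-volume upper bound through $\mathcal{S}_n$ (length $n$). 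That comparison, not a direct count of $\#\mathcal{C}_\epsilon^{(n,i)}$, is what closes the argument in the non-ergodic case.
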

\begin{proof}
	Let $\mu=\int \mu_xd\mu(x)$ be the ergodic decomposition of $\mu$. Let $\epsilon>0$, and let $E_{j_h,j_{\chi_1},\ldots ,j_{\chi_d}}:=\{x: h_{\mu_x}(f)\in [j_h\cdot \epsilon^5-\frac{\epsilon^5}{2}, j_h\cdot \epsilon^5+\frac{\epsilon^5}{2}), \chi_i(x)\in (j_{\chi_i}\cdot \epsilon^5-\frac{\epsilon^5}{2}, j_{\chi_i}\cdot \epsilon^5+\frac{\epsilon^5}{2}], i\leq d\}$, $\underline j\in\{0,\ldots,\frac{2d\log M_f}{\epsilon^5}\}^{d+1}$. Assume w.l.o.g that $\mu(E_{\underline j})>0$ for all $\underline j$, and let $\rho_\epsilon:=\epsilon\cdot(\min_{\underline j}\{\mu(E_{\underline j})\})^4>0$.
	
For each $\underline j$, we define the set $E_{\rho_\epsilon}^{\underline j}$ as in \textsection \ref{prel}, for the measure $\mu_{\underline j}:=\frac{1}{\mu(E_{\underline j})}\int_{E_{\underline j}}\mu_xd\mu(x)$. Then it follows that $\mu(\bigcupdot_{\underline j}E^{\underline j}_\epsilon)\geq e^{-\rho_\epsilon^3}$.

Let $L_\epsilon$ be a Lusin set for the function $x\mapsto h_{\mu_x}(f)$ s.t $\forall \underline j$, $\mu_{\underline j}(L^{\underline j}_\epsilon)\geq e^{-2\rho_\epsilon^3}$, where $L^{\underline j}_\epsilon :=L_\epsilon\cap E_{\underline j} $. Since the Lusin theorem tells us that $L_\epsilon$ can be chosen to be closed, there exists $0<r_\epsilon:=\frac{1}{2}\sup\{r>0:\forall x,y\in L_\epsilon, d(x,y)\leq r\Rightarrow |h_{\mu_x}(f)-h_{\mu_y}(f) |\leq \epsilon^5\}$. Similarly, assume that $L_\epsilon$ is a Lusin set for the function $x\mapsto \underline\chi(x)$, with the same estimates w.r.t the $|\cdot|_\infty$-norm.

Finally, given $n\in\mathbb{N}$, set $G_\epsilon^{\underline j, n}:= L^{\underline j}_\epsilon \cap f^{n\epsilon}[L^{\underline j}_\epsilon]$, and so $\mu(\bigcupdot_{\underline j}G_\epsilon^{\underline j,n})\geq e^{-\rho_\epsilon^2}$.

Cover each $G_\epsilon^{\underline j,n}$ with $\widetilde{\mathcal{A}}_\epsilon^{\underline j,n}$- a cover by exponential Bowen balls  $B(\cdot,-n\epsilon,e^{-2\delta n})$, as in \textsection \ref{prel}. %Hence $\#\mathcal{A}_\epsilon^{\underline j,n}\geq e^{n\epsilon(h_{\mu_j}(f)-2\epsilon^2)}$. In addition, let $\mathcal{D}_\epsilon^{\underline j,n}$ be a cover of $G_\epsilon^{\underline j,n} $by exponential Bowen balls  $B(\cdot,-n\epsilon^\frac{1}{3},e^{-\delta n})$, as in \textsection \ref{prel} as well. 
Hence $\#\mathcal{A}_\epsilon^{\underline j,n}= e^{n\epsilon(h_{\mu_j}(f)\pm2\epsilon^2)}$% and $\#\mathcal{D}_\epsilon^{\underline j,n}\geq e^{n\epsilon^\frac{1}{3}(h_{\mu_j}(f)-2\epsilon^2)}$
, where $\mathcal{A}_\epsilon^{\underline j,n}:=\{B(x,n\epsilon,e^{-\delta n}): B(x,n\epsilon,e^{-2\delta n})\in \widetilde{\mathcal{A}}_\epsilon^{\underline j,n}\}$, similarly to \textsection \ref{prel}.

Set $\mathcal{S}_n:=\bigcap_{i=0}^{\frac{1-2\sqrt{\epsilon}}{\epsilon}}f^{in\epsilon}[\bigcup_{\underline j}\bigcup\mathcal{A}_\epsilon^{\underline j,n}]$. As in Theorem \ref{firstTheorem}, $\frac{1}{\sqrt\epsilon n}\sum_{k=(1-\sqrt\epsilon)n}^{n-1}m\circ f^{-k}(\mathcal{S}_n)\geq e^{-\rho_\epsilon^\frac{3}{4}}$ (for all $\epsilon>0$ sufficiently small).
	
Then, for any $\underline j$, as in Lemma \ref{firstLemma}, for all $n$ large enough (s.t $\epsilon=\frac{1}{N^6}$ and $N^6|n$), $\frac{1}{\sqrt\epsilon n}\sum_{k=(1-\sqrt\epsilon)n}^{n-1}m\circ f^{-k}(\bigcup\mathcal{A}_\epsilon^{\underline j,n})\geq \frac{1}{2}\mu(G_\epsilon^{\underline j,n})\geq \frac{1}{2}\mu(E_{\underline j})e^{-\rho_\epsilon^2}\gg 2\rho_\epsilon^\frac{3}{4}$, whence 
\begin{equation}\label{NAIC}
	\frac{1}{\sqrt\epsilon n}\sum_{k=(1-\sqrt\epsilon)n}^{n-1}m\circ f^{-k}(\bigcup\mathcal{A}_\epsilon^{\underline j,n}\cap \mathcal{S}_n)\geq \frac{1}{5}\mu(E_{\underline j})\geq \rho_\epsilon.
\end{equation}	
	
Notice, given $B\in \vee_{i=0}^{\frac{1-2\sqrt\epsilon}{\epsilon}}f^{in\epsilon}[\bigcup_{\underline j}\mathcal{A}_\epsilon^{\underline j,n}]$ s.t $B=\bigcap_{i=0}^{\frac{1-2\sqrt{\epsilon}}{\epsilon}}f^{in\epsilon}[B_i]$ with $B_i\in \mathcal{A}_\epsilon^{\underline j^i,n}$, and given $D\in\mathcal{A}_\epsilon^{\underline j,n}$ s.t $D\cap B\neq \varnothing$, we have 
\begin{equation}\label{NAIC3}
|h_{\mu_{\underline j}}(f)-h_{\mu_{\underline j^i}}(f) |\leq \epsilon^3\text{ and }|\int\chi^ud\mu_{\underline j}-\int\chi^ud\mu_{\underline j^i} |\leq \epsilon^3\text{ for all }i\leq\frac{1-2\sqrt{\epsilon}}{\epsilon}
\end{equation}
(as long as $n$ is large enough so $2e^{-\delta n}\leq r_\epsilon$, since $h_{\mu_{\cdot}}(f)= h_{\mu_{f^{n\epsilon}(\cdot)}}(f)$ and $\underline\chi(\cdot)= \underline\chi(f^{n\epsilon}(\cdot))$).

Write $\hat{\mathcal{A}}_\epsilon^{\underline j,n}:=\{D\in \mathcal{A}_\epsilon^{\underline j,n}: \frac{1}{\sqrt\epsilon n}\sum_{k=(1-\sqrt\epsilon)n}^{n-1}m\circ f^{-k}(D)\leq e^{\epsilon^\frac{3}{2}n} \frac{1}{\sqrt\epsilon n}\sum_{k=(1-\sqrt\epsilon)n}^{n-1}m\circ f^{-k}(D\cap \mathcal{S}_n)\}$ and $\check{\mathcal{A}}_\epsilon^{\underline j,n}:= \mathcal{A}_\epsilon^{\underline j,n} \setminus \hat{\mathcal{A}}_\epsilon^{\underline j,n}$. Then $\# \hat{\mathcal{A}}_\epsilon^{\underline j,n} \geq e^{n\epsilon%^\frac{1}{3}
(h_{\mu_j}(f)-2\epsilon^2)} e^{-\epsilon^\frac{3}{2}n}$; otherwise
\begin{align*}
	0<\rho_\epsilon\leq &\frac{1}{\sqrt\epsilon n}\sum_{k=(1-\sqrt\epsilon)n}^{n-1}m\circ f^{-k}(\bigcup\mathcal{A}_\epsilon^{\underline j,n}\cap \mathcal{S}_n)\\
	\leq &\#\check{\mathcal{A}}_\epsilon^{\underline j,n}\cdot e^{-n\epsilon%^\frac{1}{3}
	(h_{\mu_{\underline j}}(f)-\epsilon^2)}e^{-n\epsilon^\frac{3}{2}}+e^{n\epsilon%^\frac{1}{3}
	(h_{\mu_{\underline j}}(f)-\epsilon^2)-n\epsilon^\frac{1}{3}}\cdot e^{-n\epsilon^\frac{1}{3}(h_{\mu_{\underline j}}(f)-\epsilon^2)}\leq 2e^{-\frac{1}{2}n\epsilon^\frac{3}{2}}\xrightarrow[n\to\infty]{}0,
\end{align*}
a contradiction!

\medskip
Now, recall that $\mathcal{A}_\epsilon^{\underline j,n}$ is a cover of multiplicity bounded by $e^{2\epsilon^2 n}$, and hence so is $\hat{\mathcal{A}}_\epsilon^{\underline j,n}$. As in \cite[Lemma~2.2]{NLE}, there exists a pair-wise disjoint sub-cover $\overline{\mathcal{A}}_\epsilon^{\underline j,n}\subseteq \hat{\mathcal{A}}_\epsilon^{\underline j,n} $ s.t $\#\overline{\mathcal{A}}_\epsilon^{\underline j,n}\geq \#\hat{\mathcal{A}}_\epsilon^{\underline j,n} e^{-2\epsilon^2n}$. Finally, notice that $\vee_{i=0}^{\frac{1-2\sqrt\epsilon}{\epsilon}}f^{in\epsilon}[\vee_{\underline j'}\mathcal{A}_\epsilon^{\underline j',n}]$ refines $\overline{\mathcal{A}}_\epsilon^{\underline j,n}$. Therefore it follows that for any $\underline j$, there exists $D_{\underline j}\in \overline{\mathcal{A}}_\epsilon^{\underline j,n}$ s.t 
\begin{align*}
\#\{B\in \vee_{i=0}^{\frac{1-2\sqrt\epsilon}{\epsilon}}f^{in\epsilon}[\vee_{\underline j'}
\mathcal{A}_\epsilon^{\underline j',n}]: B\cap D_{\underline j} \neq \varnothing\}\leq &\frac{\#\{B\in \vee_{i=0}^{\frac{1-2\sqrt\epsilon}{\epsilon}}f^{in\epsilon}[\vee_{\underline j'}
\mathcal{A}_\epsilon^{\underline j',n}]:\exists D'\in \overline{\mathcal{A}}_\epsilon^{\underline j,n}\text{ s.t }B\cap D'\neq \varnothing\}}{\#\overline{\mathcal{A}}_\epsilon^{\underline j,n}}\\
\leq &\frac{C_\epsilon\cdot e^{n(1-2\sqrt\epsilon)(h_{\mu_{\underline j}(f)}+\epsilon^3)}}{e^{n\epsilon%^\frac{1}{3}
 (h_{\mu_{\underline j}(f)}-2\epsilon^2) -n\epsilon^\frac{3}{2} -n\epsilon^2}},\end{align*}
where $C_\epsilon:=(\frac{2d\log M_f}{\epsilon^5})^\frac{d+1}{\epsilon}$. For all $n $ large enough s.t $e^{-n\epsilon^3}\leq \frac{1}{C_\epsilon}$, we have 
\begin{equation}\label{NAIC2}
	\#\{B\in \vee_{i=0}^{\frac{1-2\sqrt\epsilon}{\epsilon}}f^{in\epsilon}[\vee_{\underline j'}
	\mathcal{A}_\epsilon^{\underline j',n}]: B\cap D_{\underline j} \neq \varnothing\}\leq e^{n%(1-\epsilon^\frac{1}{3})
	h_{\mu_{\underline j}}(f)+2\epsilon^\frac{3}{2}n}.
\end{equation}
Therefore, in total,
\begin{align*}
	e^{-n\epsilon%^\frac{1}{3}
	(h_{\mu_{\underline j}}(f)-\epsilon^2)}\leq  \mu(D_{\underline j})\leq &2 \frac{1}{\sqrt\epsilon n}\sum_{k=(1-\sqrt\epsilon)n}^{n-1}m\circ f^{-k}(D_{\underline j})\leq 2 e^{n\epsilon^\frac{3}{2}}\frac{1}{\sqrt\epsilon n}\sum_{k=(1-\sqrt\epsilon)n}^{n-1}m\circ f^{-k}(D_{\underline j}\cap\mathcal{S}_n)\\
	\leq &  2 e^{n\epsilon^\frac{3}{2}} \cdot e^{n%(1-\epsilon^\frac{1}{3})
	h_{\mu_{\underline j}}(f)+2\epsilon^\frac{3}{2}n}\cdot \max_{B\cap D_{\underline j}\neq\varnothing} \frac{1}{\sqrt\epsilon n}\sum_{k=(1-\sqrt\epsilon)n}^{n-1}m\circ f^{-k}(B)\\
	\leq &2 e^{n\epsilon^\frac{3}{2}} \cdot e^{n%(1-\epsilon^\frac{1}{3})
	h_{\mu_{\underline j}}(f)+2\epsilon^\frac{3}{2}n}\cdot \max_{B\cap D_{\underline j}\neq\varnothing} \frac{1}{\sqrt\epsilon n}\sum_{k=(1-\sqrt\epsilon)n}^{n-1}m\circ f^{-(k-(1-2\sqrt\epsilon)n)}(f^{-(1-2\sqrt\epsilon)n}[B])\\
\leq &2 e^{n\epsilon^\frac{3}{2}} \cdot e^{n%(1-\epsilon^\frac{1}{3})
h_{\mu_{\underline j}}(f)+2\epsilon^\frac{3}{2}n}\cdot \max_{B\cap D_{\underline j}\neq\varnothing} e^{2\sqrt{\epsilon} d\log M_f n}m(f^{-(1-2\sqrt\epsilon)n}[B])\\
\leq &2 e^{n\epsilon^\frac{3}{2}} \cdot e^{n%(1-\epsilon^\frac{1}{3})
h_{\mu_{\underline j}}(f)+2\epsilon^\frac{3}{2}n}\cdot  e^{2\sqrt{\epsilon} d\log M_f n}e^{-n(1-2\sqrt\epsilon)(\chi^u(\mu_{\underline j}) -\epsilon^2)} e^{2\epsilon^\frac{1}{3}n}.
\end{align*}	
	Where the last inequality is by \eqref{NAIC3} similarly to Lemma  \ref{tubeVol}. Then,
	\begin{align*}
		e^{-nh_{\mu_{\underline j}}(f)}\leq e^{-n\chi^u(\mu_{\underline j})}e^{10d\log M_f\epsilon^\frac{1}{3} n},
	\end{align*}
		and since $\epsilon>0$ is allowed to be arbitrarily small, for $\mu$-a.e $x$ $h_{\mu_x}(f)\geq \chi^u(x)$, and we are done.
\end{proof}

\section{Volume is almost exponentially mixing}\label{almostExpMix}

\subsection{Exponential decay of correlations implies exponential convergence}

While checking the condition at \eqref{expmix} may seem difficult since one has to a-priori know the measure $\mu$, here we present a condition which implies \eqref{expmix} without comparing to an explicit $f$-invariant measure.

\begin{prop}\label{expCorToExpMix}
	Assume that there exist $C,\gamma,\alpha>0$ s.t for all $g,h\in \mathrm{H\ddot{o}l}_{\alpha}(M)$ s.t $\int h dm=0$,
	$$\left|\int g\circ f^n\cdot hdm\right|\leq C\|g\|_\alpha\|h\|_\alpha e^{-\gamma n}.$$
	Then there exists an $f$-invariant Borel probability $\mu$ s.t $m\circ f^{-n}\xrightarrow[n\to\infty]{}\mu$ exponentially fast, and moreover \eqref{trueExpMix} holds.
\end{prop}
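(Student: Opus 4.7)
The plan is to construct $\mu$ as the exponentially fast limit of the scalar sequence $a_n(g) := \int g \circ f^n \, dm$ for each $\alpha$-H\"older observable $g$, by controlling consecutive differences via the decay hypothesis, and then to deduce \eqref{trueExpMix} by a mean-zero decomposition of the second test function.

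The key identity comes from the change of variables $y = f(x)$:
\[
a_{n+1}(g) = \int (g \circ f^n) \cdot \phi \, dm, \qquad \phi := |J_{f^{-1}}|,
\]
which gives
\[
a_{n+1}(g) - a_n(g) = \int (g \circ f^n) \cdot (\phi - 1) \, dm.
\]
Taking $g \equiv 1$ in the first identity forces $\int \phi \, dm = m(M) = 1$, so $\phi - 1$ has $m$-mean zero. Since $f \in \mathrm{Diff}^{1+\beta}(M)$ the Jacobian $\phi$ is $\beta$-H\"older, and (assuming WLOG $\alpha \leq \beta$) the hypothesis applies to give
\[
|a_{n+1}(g) - a_n(g)| \leq C \|\phi - 1\|_\alpha \|g\|_\alpha e^{-\gamma n}.
\]
Summing the geometric series shows $(a_n(g))_n$ is Cauchy with $|a_{n+k}(g) - a_n(g)| \leq C'' \|g\|_\alpha e^{-\gamma n}$ uniformly in $k$, and its limit $\mu(g) := \lim_n a_n(g)$ therefore satisfies $|a_n(g) - \mu(g)| \leq C'' \|g\|_\alpha e^{-\gamma n}$, i.e.\ \eqref{expConv}.

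Next, I would check that $\mu$ defines an $f$-invariant Borel probability. Positivity, linearity, $\mu(1)=1$, and $|\mu(g)|\leq \|g\|_\infty$ are inherited from $(a_n)_n$; together with the density of $\mathrm{H\ddot{o}l}_\alpha(M)$ in $C(M)$ and the Riesz representation theorem this produces a Borel probability. Invariance is immediate: $\mu(g\circ f) = \lim_n a_{n+1}(g) = \mu(g)$.

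Finally, for \eqref{trueExpMix}, I would decompose $h = (h - \bar h) + \bar h$ with $\bar h := \int h \, dm$. The mean-zero part contributes
\[
\left| \int (g \circ f^n)(h - \bar h) \, dm \right| \leq 2C \|g\|_\alpha \|h\|_\alpha e^{-\gamma n}
\]
directly from the hypothesis. The constant part yields $\bar h \cdot a_n(g)$, which differs from $\bar h \cdot \mu(g) = \int g \, d\mu \int h \, dm$ by at most $|\bar h| \cdot C'' \|g\|_\alpha e^{-\gamma n} \leq \|h\|_\alpha C'' \|g\|_\alpha e^{-\gamma n}$ via the previous step. Summing the two pieces gives \eqref{trueExpMix}. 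The main (mild) technical obstacle is the compatibility of H\"older exponents: if $\alpha > \beta$ then $\phi$ fails to be $\alpha$-H\"older, and one must replace $\phi - 1$ by a mollified $\alpha$-H\"older, mean-zero proxy whose approximation error decays exponentially and is balanced against the polynomial growth of its H\"older norm, a standard but tedious interpolation.
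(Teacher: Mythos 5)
Your proposal is correct, and its engine is the same identity the paper uses: writing $a_{n+1}(g)-a_n(g)=\int g\circ f^n\,(\mathrm{Jac}(f^{-1})-1)\,dm$ and feeding the mean-zero, $\beta$-H\"older function $\mathrm{Jac}(f^{-1})-1$ into the decay hypothesis (the paper's Step 2 does exactly this, with a general nonnegative H\"older density $h$ in place of $1$, i.e. $H=\mathrm{Jac}(f^{-1})\,h\circ f^{-1}-h$). Where you genuinely diverge is in how \eqref{trueExpMix} is extracted: the paper constructs a limit functional $\mu_h$ for every nonnegative H\"older $h$, shows the limit is independent of $h$, and then splits a signed $h$ into $h^+-h^-$; you instead establish \eqref{expConv} once (the case $h\equiv 1$) and deduce \eqref{trueExpMix} for arbitrary $h$ from the decomposition $h=(h-\bar h)+\bar h$, applying the hypothesis to the mean-zero part and the already-proved exponential convergence to the constant part. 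Your route is leaner and avoids the independence-of-$h$ and $h^{\pm}$ arguments; the paper's route yields as a by-product the decay estimate for all pairs of $\beta$-H\"older observables (its Step 1 in full generality), which is convenient for later use.

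Two small corrections. First, ``WLOG $\alpha\le\beta$'' is not a reduction: lowering $\alpha$ enlarges the class of test functions in the hypothesis, so the case $\alpha>\beta$ genuinely must be handled — as you then do via mollification, which is precisely the paper's Step 1 (partition of unity plus convolution at scale $e^{-\eta n}$, with the mean re-subtracted so the proxy stays admissible). Second, in that regularization the H\"older norm of the proxy grows exponentially, of order $e^{\eta n(d+1)}$, not polynomially; the balance still closes because $\eta$ may be chosen small relative to $\gamma$ (the paper takes $\eta=\gamma/(4(d+1))$), at the cost of degrading the rate to some $\gamma'>0$ — a loss both your argument and the paper's incur, and which \eqref{trueExpMix} tolerates since its constants are quantified existentially.
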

\begin{proof}
	We start by proving that the exponential convergence property applies to the Jacobian of $f$ (recall that $d_\cdot f$, $d_\cdot f^{-1}$ are $\beta$-H\"older continuous):
	
\medskip
\noindent\textbf{Step 1:} $\exists C''\geq C$, $\gamma'\in (0,\gamma)$ s.t $\forall g,h\in \mathrm{H\ddot{o}l}_{\beta}(M)$ with $\int h dm=0$, $\left|\int g\circ f^n\cdot hdm\right|\leq C''\|g\|_\beta\|h\|_\beta e^{-\gamma' n}$.

\medskip
\noindent\textbf{Proof:} Let $\{\psi_i\}_{i=1}^N$ be a partition of unity of $M$, s.t for all $i\leq N$, $0\leq \psi_i\leq 1$ is a $C^\infty$ function which is supported on a ball $U_i$ with a $C^\infty$ diffeomorphism $\Theta_i: B_{\mathrm{R}^d}(0,1)\to U_i$. Let $g$ be a $\beta$-H\"older function (w.l.o.g $\|g\|_\infty\leq 1$), and assume that $g$ is supported on some set $U_i$. 

Let $\eta>0$ to be determined later, and let $K_n:B_{\mathrm{R}^d}(0,e^{-\eta n})\to\mathbb{R}$ be the cone function kernel: $K_n(t)=(e^{-\eta n}-|t|)\cdot \frac{H_n}{e^{-\eta n}}$ where $H_n=\frac{C_d}{e^{-\eta nd}}$ satisfies $\int K_n(t) dt=1$. Extend $K_n$ naturally to $\mathbb{R}^d$ by taking the value $0$ outside $B_{\mathrm{R}^d}(0,e^{-\eta n})$. Set $g_n:=(g\circ \Theta_i* K_n)\circ \Theta_i^{-1}$. Then one can check that the following properties hold:
\begin{enumerate}
	\item $ \|K_n\|_{\alpha}\leq\|K_n\|_{\mathrm{Lip}}=\frac{H_n}{e^{-\eta n}}=C_d\cdot e^{\eta n (d+1)}$ where $C_d$ is a constant depending only on $d$,
	\item $\|g_n\|_\alpha\leq \Vol(B_{\mathrm{R}^d}(0,1))\cdot \mathrm{Lip}(\Theta_i^{-1})\cdot \|K_n\|_\alpha$,
	\item $|g_n(x)-g(x)|\leq \mathrm{Lip}(\Theta_i)\cdot \|g\|_\beta\cdot e^{-\eta n\beta}$.
\end{enumerate}

Notice, given $g,h\in \mathrm{H\ddot{o}l}_{\beta}(M)$ with $\int h dm=0$ (w.l.o.g $\|g\|_\infty,\|h\|_\infty\leq 1$), we can write $g= \sum_{i=1}^N \psi_i\cdot g$, $h= \sum_{i=1}^N \psi_i\cdot h$, and $\sum_{i=1}^N \int h\cdot psi_i dm=0$, where $\psi_i g$ and $\psi_i h$ are $\beta$-H\"older for all $i\leq N$. Therefore, $$\int g\circ f hdm=\sum_{i=1}^N\sum_{j=1}^N\int (\psi_i g)\circ f^n (\psi_j h-\int\psi_j hdm)dm.$$
Set $g^i:=\psi_i\cdot g$ and $h^j:=\psi_j\cdot h-\int \psi_j\cdot hdm $, then 
\begin{align*}
\int g^i\circ f^n h^jdm=& \int (g^i-g^i_n)\circ f^n h^jdm + \int g^i_n\circ f^n (h^j-h^j_n)dm+ \int g^i_n\circ f^n  \cdot (\int h^j_n dm)dm \\
+&	 \int g^i_n\circ f^n \cdot (h^j_n-\int h^j_n dm)dm.
\end{align*}
On the r.h.s, all three first summands are exponentially small by item (3). The last summand on the r.h.s is bounded by $Ce^{-\gamma n}\|g^i_n\|_\alpha \|h^j_n\|_\alpha\leq Ce^{-\gamma n}\cdot C_d^2 \Vol(B_{\mathrm{R}^d}(0,1))^2\mathrm{Lip}(\Theta_i^{-1}) \mathrm{Lip}(\Theta_j^{-1}) e^{2\eta n (d+1) }\leq C'_d C_0 e^{-\frac{\gamma}{2}n}$ whenever $\eta:=\frac{\gamma}{4(d+1)}$ and $C_0:=\max_{i\leq N}\{\mathrm{Lip}(\Theta_i), \mathrm{Lip}(\Theta_i^{-1})\}$. Therefore for $\gamma':=\eta \beta\in (0,\frac{\gamma}{2})$,
\begin{align*}
	\left|\int g\circ f^n hdm\right|\leq& N^2\cdot \left(C_0\|g\|_\beta e^{-\eta\beta n}+(1+C_0\|g\|_\beta e^{-\eta n\beta}) C_0\|h\|_\beta e^{-\eta n\beta}+C_0\|h\|_\beta e^{-\eta n\beta} + C'_d C_0 e^{-\frac{\gamma}{2}n}\right)\\
	\leq & C''\|g\|_\beta\|h\|_\beta e^{-\gamma' n}.
\end{align*}

\medskip
\noindent\textbf{Step 2:} There exist a constant $C_f''>0$ and an $f$-invariant Borel probability $\mu$ s.t $\forall g,h\in \mathrm{H\ddot{o}l}_{\beta}(M)$,
$$\left|\int g\circ f^n hdm-\int gd\mu\int hdm\right|\leq C_f''e^{-\gamma'n}\|g\|_\beta \|h\|_\beta.$$

\medskip
\noindent\textbf{Proof:} Fix $h\in \mathrm{H\ddot{o}l}_{\beta}(M)$ with $h\geq0$ and $h\not\equiv 0$, and let $g\in \mathrm{H\ddot{o}l}_{\beta}(M)$. Define the sequence $a_n^h(g):= \int g\circ f^n hdm$, and notice 
$$\left| a_{n+1}^h(g)-a_n^h(g)\right|=\left|  \int g\circ f^n \Jac(f^{-1})h\circ f^{-1}dm- \int g\circ f^n hdm\right|=\left| \int g\circ f^n H dm\right|,$$
where $H:= \Jac(f^{-1})h\circ f^{-1} -h\in \mathrm{H\ddot{o}l}_{\beta}(M) $ with $\int Hdm=0$. Therefore, $| a_{n+1}^h(g)-a_n^h(g)| \leq C_f\|h\|_\beta\|g\|_\beta e^{-\gamma' n}$, for a constant $C_f$ depending on $f$. Thus the sequence $\{a_n^h(g)\}_{n\geq 0}$ is a Cauchy sequence and has a limit defined $\int h dm \cdot \mu_h(g)$. Notice:
\begin{enumerate}
	\item $\mu_h(a_1g_1+a_2g_2)= a_1\mu_h(g_1)+a_2\mu_h(g_2)$,
	\item $\mu_h(1)=1$,
	\item $g\geq 0\Rightarrow \mu_h(g)\geq 0$,
	\item $\mu_h(g)\leq \|g\|_\infty$.
\end{enumerate}
Therefore by the Riesz representation theorem, $\mu_h(\cdot)$ defined a Borel probability measure on $M$. Moreover, it is easy to check that $\mu_h(g)= \mu_h(g\circ f)$ from definition, therefore $\mu_h$ is $f$-invariant.

Given $h_1, h_2\in \mathrm{H\ddot{o}l}_{\beta}(M)$ with $h_1,h_2\geq0$ and $h_1,h_2\not\equiv 0$, then 
$$\left|\frac{1}{\int h_1 dm}a_n^{h_1}(g)-\frac{1}{\int h_2dm}a_n^{h_2}(g)\right|= \left|\int g\circ f^n \cdot \left(\frac{h_1}{\int h_1dm}-\frac{h_2}{\int h_2dm}\right)\right| \xrightarrow[n\to\infty]{}0.$$
Therefore $\mu_h(g)$ is independent of the choice of $h$, and can be denoted by $\mu(g)$. Given $h \in \mathrm{H\ddot{o}l}_{\beta}(M)$ with $\|h\|_\infty\leq1$ and $h\not\equiv 0$, we can write $h=h^+-h^-$, where $0\leq h^+,h^-\leq 1$ and $\|h^+\|_\beta, \|h^-\|_\beta\leq \|h\|_\beta$. Assume w.l.o.g $h^+,h^-\not\equiv 0$. Then for $\sigma\in\{-,+\}$,
\begin{align*}
	\left|a_n^{h^\sigma}(g)-\int h^\sigma dm\cdot \mu(g)\right|\leq C_f'\|h\|_\beta\|g\|_\beta e^{-\gamma'n}\Rightarrow \left|a_n^{h}(g)-\int hdm\cdot \mu(g)\right|\leq 2C_f'\|h\|_\beta\|g\|_\beta e^{-\gamma'n}.
\end{align*}
In particular, for $h\equiv 1$, we get $m\circ f^{-n}\xrightarrow[\exp]{}\mu$.
\end{proof}

\subsection{Positive entropy, ergodicity, and uniqueness}\label{newSec4}

In this section we assume the following strong notion of exponential convergence:
\begin{equation}\label{NAIC4}
	\exists C>0,\alpha\in(0,1],\gamma>0:\forall g,h\in \mathrm{H\ddot{o}l}_\alpha(M), \left|\int g\circ f^n \cdot hdm-\mu(g)\cdot m(h)\right|\leq C\cdot \|g\|_\alpha\cdot \|h\|_\alpha\cdot e^{-\gamma n}.
\end{equation}

Recall Proposition \ref{expCorToExpMix}, where we show that \eqref{NAIC4} holds whenever the volume is almost exponentially mixing (recall \eqref{expDecay}). The condition of almost exponential mixing of the volume is inspired by the setup studied by Dolgopyat, Kangowski, and Rodriguez-Hertz in \cite{ExpMixBern} (however notice that the volume need \textbf{not} be invariant in our setup).

We continue to show that under the assumption of \eqref{NAIC4}, unless $\mu$ is a Dirac delta measure at a fixed point (a necessary condition, see the remark following Theorem \ref{posExps}), indeed $\mu$ must be an ergodic %mixing 
SRB measure with at least one positive Lyapunov exponent almost everywhere, and it is the unique SRB measure of $(M,f)$. %This allows us to extend the arguments of \cite{ExpMixBern} and show that $\mu$ is a Bernoulli measure. 
A nice corollary of our proof is that every $f$-invariant Borel probability measure on $M$ has a uniform bound form below on its maximal Lyapunov exponent in terms of the rate of mixing, aside at most for $\mu$ in the case where $\mu$ is a Dirac delta measure.

\begin{theorem}[Positive exponents]\label{posExps}
The following dichotomy holds:
\begin{enumerate}
	\item for every ergodic $f$-invariant Borel probability $\nu$, $\max_i\chi^+_i(x)>\frac{\gamma}{2d}$ $\nu$-a.e,
	\item $\mu$ is a Dirac delta measure at a fixed point with $\chi^u(\mu)=0$, and every other ergodic $f$-invariant Borel probability $\nu$ has $\max_i \chi^+_i(x)> \frac{\gamma}{2d}$ $\nu$-a.e.
\end{enumerate}
\end{theorem}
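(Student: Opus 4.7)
My plan is to establish a single rigidity statement that implies both branches of the dichotomy at once: any ergodic $f$-invariant Borel probability $\nu$ with $\chi_{\max}(\nu) := \max_i \chi^+_i(\nu) \leq \gamma/(2d)$ must coincide with $\mu$, $\mu$ must be a Dirac mass at a fixed point, and $\chi^u(\mu) = 0$. Case (1) is then the situation where no such $\nu$ exists, and case (2) is the situation where such a $\nu$ exists (necessarily equal to $\mu$).

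\textbf{Main step (Dirac concentration).} Fix ergodic $\nu$ with $\chi_{\max}(\nu) \leq \gamma/(2d)$, a H\"older function $g$, and a $\nu$-generic, Pesin-regular point $x$. I will apply \eqref{NAIC4} to $g$ and a Lipschitz indicator $h_n$ of the forward Bowen ball $B(x, n, e^{-\eta n})$ for a small $\eta > 0$ to be chosen. Since $\chi_{\max}(\nu) \leq \gamma/(2d)$, the Bowen ball contains a Euclidean ball of radius $\gtrsim e^{-n(\chi_{\max}(\nu) + \eta)} \geq e^{-n(\gamma/(2d)+\eta)}$, giving $\|h_n\|_\alpha \lesssim e^{\alpha n(\gamma/(2d)+\eta)}$ and $m(h_n) \gtrsim e^{-n(\gamma/2+d\eta)}$. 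For $\eta$ satisfying $(\alpha+d)\eta < (d-\alpha)\gamma/(2d)$, the \eqref{NAIC4}-error $C\|g\|_\alpha\|h_n\|_\alpha e^{-\gamma n}$ is $o(m(h_n))$. On the other hand, for $y \in B(x,n,e^{-\eta n})$ we have $d(f^n y, f^n x) \leq e^{-\eta n}$, so H\"older continuity of $g$ gives $\int g\circ f^n \cdot h_n\, dm = g(f^n x) m(h_n) + O(\|g\|_\alpha e^{-\alpha\eta n} m(h_n))$. Comparing with \eqref{NAIC4}, I conclude $g(f^n x) \to \mu(g)$ as $n \to \infty$ for every H\"older $g$ and every $\nu$-generic Pesin-regular $x$. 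By Birkhoff the orbit $\{f^n x\}$ equidistributes to $\nu$, so $\frac{1}{N}\sum_{n<N}\delta_{g(f^n x)} \to g_*\nu$ in weak-$*$; together with the pointwise convergence $g(f^n x) \to \mu(g)$, this forces $g_*\nu = \delta_{\mu(g)}$, i.e.\ $g$ is $\nu$-a.e.\ constant equal to $\mu(g)$ for \emph{every} H\"older $g$. Since H\"older functions separate points, $\nu = \delta_{x_0}$ for some $x_0 \in M$; by $f$-invariance $x_0$ is a fixed point, and $\mu(g) = g(x_0)$ gives $\mu = \nu = \delta_{x_0}$.

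\textbf{Vanishing of $\chi^u$.} Suppose for contradiction $d_{x_0}f$ has an eigenvalue of modulus $> 1$. Then $W^u(x_0)$ is positive-dimensional, $W^s(x_0)$ is a proper submanifold, and $m(W^s(x_0))=0$; hence $m$-a.e.\ $y$ has $f^n y \not\to x_0$, so $m\circ f^{-n} \not\to \delta_{x_0}$ weakly. But \eqref{NAIC4} with $h\equiv 1$ gives $m(g\circ f^n) \to \mu(g) = g(x_0)$ for every H\"older $g$, which is precisely weak convergence $m\circ f^{-n} \to \delta_{x_0}$: contradiction. Thus every eigenvalue of $d_{x_0}f$ has modulus $\leq 1$, so $\chi^u(\mu) = 0$.

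\textbf{Main obstacle.} The most delicate point is the parameter calibration in the main step, where the \eqref{NAIC4}-error must be absorbed by the (exponentially small) measure $m(h_n)$; the tight inequality $(\alpha+d)\eta < (d-\alpha)\gamma/(2d)$ is satisfiable whenever $\alpha < d$ (automatic for $d\geq 2$, and handled in $d=1$ by working with H\"older exponent $\alpha < 1$). Conceptually, the main insight is that exponential mixing at rate $\gamma$ combined with a maximal Lyapunov exponent of at most $\gamma/(2d)$ leaves no room for non-degenerate orbit dynamics, collapsing $\nu$ onto a single fixed point; the vanishing of $\chi^u(\mu)$ then follows because a saddle fixed point is dynamically incompatible with exponential concentration of the Riemannian volume onto it.
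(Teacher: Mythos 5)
Your main rigidity step is correct and is essentially the paper's own argument: you test \eqref{NAIC4} at time $n$ against a Lipschitz bump adapted to the Bowen ball $B(x,n,e^{-\eta n})$ of a $\nu$-generic, Pesin-regular point, use $\chi_{\max}(\nu)\le \gamma/(2d)$ to bound the H\"older norm of the bump by roughly $e^{\alpha n(\gamma/(2d)+\eta)}$ and its volume from below by roughly $e^{-n(\gamma/2+d\eta)}$, and conclude $\nu=\mu=\delta_{x_0}$ with $x_0$ fixed. The paper runs the same computation, organized slightly differently: it first produces two points $p,q$ carrying $\mu$-mass at least $e^{-(d+1)\epsilon n}$ on $e^{-\epsilon n}$-balls to force $\mu$ to be a Dirac mass, and then averages $h\circ f^n(x)$ over $n\in[N,2N]$ to get $\nu=\mu$; your route via $g(f^n x)\to\mu(g)$ plus Birkhoff is a clean streamlining of the same mechanism, and your calibration $(\alpha+d)\eta<(d-\alpha)\gamma/(2d)$ is the constraint implicit there. (Two small caveats: the inscribed-ball radius should carry the usual Pesin error term, harmless here; and for $d=1$, $\alpha=1$ your remedy of ``working with $\alpha<1$'' is not available, since \eqref{NAIC4} for a given exponent does not imply it for smaller exponents --- but in that case one still gets the bound with $\ge$ in place of $>$, which is all the paper's own proof yields anyway.)

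The genuine gap is in your ``Vanishing of $\chi^u$'' step. From ``$f^n y\not\to x_0$ for $m$-a.e.\ $y$'' you infer ``$m\circ f^{-n}\not\to\delta_{x_0}$ weakly'', but this implication is false: weak-$*$ convergence $m\circ f^{-n}\to\delta_{x_0}$ is exactly convergence in $m$-measure of $y\mapsto d(f^n y,x_0)$ to $0$, which is strictly weaker than $m$-a.e.\ convergence. Volume can concentrate on every neighborhood of a saddle-type fixed point even though almost every individual orbit keeps leaving that neighborhood (think of a saddle whose unstable separatrices form homoclinic loops: typical orbits spend an ever larger fraction of their time near the saddle without ever converging to it). So no contradiction follows, and the clause $\chi^u(\mu)=0$ --- i.e.\ ruling out an expanding eigenvalue of modulus in $(1,e^{\gamma/(2d)}]$ at $x_0$, which your main step does not exclude --- is not established by this argument. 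It is, however, immediate from what is already proved in the paper: Theorem \ref{nonErgSRB} (valid under \eqref{expmix}, hence under \eqref{NAIC4}) gives $h_\mu(f)=\int\chi^u\,d\mu$, and a Dirac mass at a fixed point has zero entropy, so $\chi^u(\mu)=0$. Replacing your last step by this appeal (or by a genuinely quantitative argument showing exponential concentration of volume is incompatible with an expanding eigenvalue) completes the proof.
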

In particular, if $\mu$ is not a Dirac delta measure at a fixed point, then $\max_i \chi^+_i(x)\geq \frac{\gamma}{2d} $ $\mu$-a.e.
\begin{proof}
Let $\nu$ be an ergodic $f$-invariant Borel probability s.t $\max_i\chi_i^+(x)\leq \frac{\gamma}{2d}-8(d+1)\epsilon$ $\nu$-a.e, where w.l.o.g $0<\epsilon\ll \frac{\gamma}{2d}$. Let $x$ be a $\nu$-typical point. Let $n$ large s.t $f^i[B(x,n,e^{-\epsilon n})]$ is contained in the Pesin chart of $f^i(x)$ for all $0\leq i\leq n$. Let $g_x$ be a Lipschitz function s.t $g_x|_{B(x,n,e^{-2\epsilon n})}=1$, $g_x|_{B(x,n,e^{-\epsilon n})^c}=0$, and $\mathrm{Lip}(g_x)\leq e^{(\frac{\gamma}{2d}-3(d+1)\epsilon)n }$. 

Let $p$ and $q$ s.t $\mu(B(p, e^{-\epsilon n})), \mu(B(q, e^{-\epsilon n}))\geq e^{-n(d+1)\epsilon}$ for all $n$ large enough, and let $g_t|_{B(t,e^{-2\epsilon n})}=1$, $g_t|_{B(t,e^{-\epsilon n})^c}=0$, and $\mathrm{Lip}(g_t)\leq e^{2\epsilon n}$, $t\in\{p,q\}$. Then by \eqref{NAIC4}, for $t\in \{p,q\}$ and all $n$ large enough,
\begin{align*}
\int g_t\circ f^ng_xdm=&\mu(g_t)m(g_x)\pm 4Ce^{-\gamma n}e^{2\epsilon n}e^{(\frac{\gamma}{2d}-3(d+1)\epsilon)n }\\
=&e^{\pm \epsilon} \mu(g_t)m(g_x)>0 \text{ }(\because \mu(g_t)m(g_x)\geq e^{-n(d+1)2\epsilon-(\frac{\gamma}{2d}+\epsilon)dn}).
\end{align*}
Thus in particular $B(p,e^{-\epsilon n})\cap B(f^n(x),e^{-\epsilon n})\neq \varnothing$ and $B(q,e^{-\epsilon n})\cap B(f^n(x),e^{-\epsilon n})\neq \varnothing$, and so $d(p,q)\leq 4e^{-\epsilon n}\xrightarrow[n\to\infty]{}0$. Therefore $\mu=\delta_p=\delta_q$.

Moreover, if we assume further that $x$ is $\nu$-generic, for any $h\in \mathrm{Lip}_+(M)$, 
\begin{align*}
m(g_x)(h\circ f^n(x)\pm \mathrm{Lip}(h)e^{-\epsilon n})=&\int h\circ f^ng_xdm=\mu(h)m(g_x)\pm 4Ce^{-\gamma n}e^{\epsilon n}e^{(\frac{\gamma}{2d}-3(d+1)\epsilon) n}\\
= &m(g_x)(\mu(h)\pm 4Ce^{-\frac{\gamma}{2} n}e^{\epsilon n}e^{(\frac{\gamma}{2d}-3(d+1)\epsilon) n}e^{\epsilon d n}).
\end{align*}
Averaging over $n=N,\ldots ,2N$, for $N\in \mathbb{N}$ large,
\begin{align*}
\nu(h)\xleftarrow[\infty\leftarrow N]{}\frac{1}{N}\sum_{n=N}^{2N-1}h\circ f^n(x)\pm \mathrm{Lip}(h)e^{-\epsilon N}=\mu(h)\pm 4Ce^{-\frac{\gamma}{2} N}e^{\epsilon N}e^{(\frac{\gamma}{2d}-3(d+1)\epsilon) N}e^{\epsilon d N}\xrightarrow[N\to\infty]{}\mu(h).
\end{align*}
Therefore $\mu(h)=\nu(h)$, and so by the Riesz representation theorem, $\nu=\mu$ (since $\overline{\mathrm{Lip}_+(M)}^{C(M)}=C_+(M)$).
\end{proof}
\noindent\textbf{Remark:} The assumption that $\mu$ is not a Dirac delta measure is necessary, as can be seen by the north-pole south-pole example: Let $\mathbb{S}^1$ be the unit circle, let $f\in\mathrm{Diff}^{1+\beta}(\mathbb{S}^1)$, and let two fixed points, $N\in \mathbb{S}^1 $ with a derivative bigger than $1$, and $S\in \mathbb{S}^1 $ with a derivative smaller than $1$. One can check that in this case $|\int_{\mathbb{S}^1} g\circ f^n h dm-g(S)m(h)|$, $g,h\in \mathrm{Lip}(\mathbb{S}^1)$, is exponentially small as in \eqref{NAIC4}, by using a partition of unity which separates $N$ and $S$. This example extends to a closed surface using a D-A system with two repellers, and a fixed attracting point.

\medskip
By Theorem \ref{nonErgSRB} and Theorem \ref{posExps}, it follows from \cite{LedrappierYoungI} that $\mu$ has absolutely continuous conditionals on unstable leaves a.e. The following theorem is a corollary of this fact together with \eqref{NAIC4}. The proof uses absolutely continuous fake $cs$-foliations in exponentially small charts, constructed in \cite{ExpMixBern}. These foliations are used to treat the trajectory of an exponentially small ball as the trajectory of single unstable leaf, where the conditional measure of $\mu$ is equivalent to the induced Riemannian volume, which lets us compare the two measures.
\begin{prop}\label{UMix}
Assume that there exists an ergodic SRB measure $\nu$ with $h_\nu(f)>0$. For every $\epsilon\in (0,\frac{\gamma}{4\log M_f})$ there is a set $G_\epsilon$ with $\nu(G_\epsilon)\geq e^{-\epsilon}$ s.t $\forall x%,y
\in G_\epsilon$, $\forall \delta\in(0,\epsilon)$, $\forall n\geq n_{\epsilon,\delta}$, $\forall g\in \mathrm{H\ddot{o}l}_\alpha^+(M)$, \begin{comment}$$\int %\mathbb{1}_{B(x,n\epsilon,e^{-n\delta})}
g\circ f^n \mathbb{1}_{B^u(x,%n\epsilon,
e^{-n\delta})} d\nu_{\xi^u(x)}=(\mu(%B(x,n\epsilon,e^{-n\delta})
g)\cdot \nu_{\xi^u(x)}(B^u(x%,n\epsilon
,e^{-n\delta}) )\pm e^{-n\frac{\delta\alpha}{3} }\|g\|_{\alpha\mathrm{-H\ddot{o}l}}\cdot \nu_{\xi^u(x)}(B^u(x%,n\epsilon
,e^{-n\delta}) ))e^{\pm\delta \|g\|_\infty},$$
\end{comment}
\begin{equation}\label{BeforeLasteq}
\mu(g)\geq  \frac{e^{-7\delta^2 d}}{\nu_{\xi^u(x)}(B^u(x,e^{-\delta n})\cap K_\epsilon)
\mathcal{W}^\mathrm{cs}_n(x))}\int\limits_{B^u(x,e^{-\delta n})\cap K_\epsilon} (g\circ f^n-\|g\|_{\alpha\mathrm{-H\ddot{o}l}}e^{-\frac{\delta}{2}n\alpha}) d\nu_{\xi^u(x)}-C\|g\| _{\alpha\mathrm{-H\ddot{o}l}} e^{-(\gamma-2\delta)n} \end{equation}
where $\xi^u$ is a measurable partition subordinated to the unstable foliation of $\nu$, $\nu_{\xi^u(\cdot)}$ are the respective conditional measures, and $K_\epsilon$ is a Pesin block with $\nu(K_\epsilon)\geq e^{-\epsilon^2}$.
\end{prop}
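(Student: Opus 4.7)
Combining Theorem \ref{nonErgSRB} and Theorem \ref{posExps} (the latter applied to $\nu$ itself), the measure $\nu$ is a hyperbolic SRB measure, so its conditional measures $\nu_{\xi^u(\cdot)}$ on an unstable measurable partition $\xi^u$ are equivalent to the induced Riemannian volume on unstable plaques, with densities uniformly in $e^{\pm\delta^2}$ on a suitable Pesin block. My plan is to transport the unstable integral appearing in \eqref{BeforeLasteq} into a full $d$-dimensional Riemannian integral over a thin $cs$-thickening of $B^u(x,e^{-\delta n})\cap K_\epsilon$, apply the hypothesis \eqref{NAIC4} on this thickening, and divide by the appropriate Riemannian normalization.

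First I would fix a Pesin block $K_\epsilon$ with $\nu(K_\epsilon)\geq e^{-\epsilon^2}$ on which the density $d\nu_{\xi^u(\cdot)}/d\mathrm{vol}^u$ and the angle between $E^s$ and $E^u$ are uniformly controlled, and restrict further to a set $G_\epsilon\subseteq K_\epsilon$ of points $x$ for which the orbit segment $f^0(x),\dots,f^n(x)$ stays in Pesin charts at scale $e^{-\delta n}$ for all $n\geq n_{\epsilon,\delta}$. Inside the Pesin chart of $x$ I invoke the absolutely continuous fake $cs$-foliation $\mathcal{F}^{cs}$ of Dolgopyat--Kanigowski--Rodriguez-Hertz \cite{ExpMixBern}, whose plaques are $C^1$ graphs over the $cs$-direction, agree with the genuine Pesin $cs$-manifolds at Pesin-regular centers, and have unstable holonomy Jacobian in $e^{\pm\delta^2}$.

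Next I would set $\mathcal{W}^{cs}_n(x)$ to be the $\mathcal{F}^{cs}$-saturation of $B^u(x,e^{-\delta n})\cap K_\epsilon$ by plaques of radius $e^{-\delta n}$. Fubini for $m$, together with absolute continuity of $\mathcal{F}^{cs}$ and of $\nu_{\xi^u(x)}$, gives
\begin{equation*}
m(\mathcal{W}^{cs}_n(x)) \;=\; e^{\pm 3\delta^2 d}\,\nu_{\xi^u(x)}\bigl(B^u(x,e^{-\delta n})\cap K_\epsilon\bigr)\cdot \mathrm{vol}^{cs}\bigl(B^{cs}(0,e^{-\delta n})\bigr),
\end{equation*}
and since $f^n$ contracts each fake $cs$-plaque at least to diameter $e^{-\delta n/2}$ (the $cs$-plaque is exponentially close to a genuine stable manifold at its center $z\in K_\epsilon$), for every $y$ in the plaque through $z\in B^u(x,e^{-\delta n})\cap K_\epsilon$ we have $|g\circ f^n(y)-g\circ f^n(z)|\leq \|g\|_\alpha e^{-\frac{\delta}{2}n\alpha}$. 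Combining the two estimates produces
\begin{equation*}
\int_{\mathcal{W}^{cs}_n(x)}\!\! g\circ f^n\,dm \;\geq\; e^{-4\delta^2 d}\,\mathrm{vol}^{cs}(B^{cs}(0,e^{-\delta n}))\int_{B^u(x,e^{-\delta n})\cap K_\epsilon}\!\!\bigl(g\circ f^n - \|g\|_\alpha e^{-\frac{\delta}{2}n\alpha}\bigr)d\nu_{\xi^u(x)}.
\end{equation*}

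Finally I would apply \eqref{NAIC4} with $g$ and an $\alpha$-H\"older bump $h_n$ satisfying $\mathbb{1}_{\mathcal{W}^{cs}_n(x)}\leq h_n\leq \mathbb{1}_{2\mathcal{W}^{cs}_n(x)}$ and $\|h_n\|_\alpha\leq e^{\delta n\alpha(d+1)}$; the hypothesis $\delta<\epsilon<\gamma/(4\log M_f)$ makes $C\|g\|_\alpha\|h_n\|_\alpha e^{-\gamma n}\leq C\|g\|_\alpha e^{-(\gamma-2\delta)n}m(h_n)$, so $\mu(g)\,m(\mathcal{W}^{cs}_n(x))$ dominates $\int_{\mathcal{W}^{cs}_n(x)} g\circ f^n\,dm$ up to that error. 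Substituting the previous display and dividing by $\nu_{\xi^u(x)}(B^u(x,e^{-\delta n})\cap K_\epsilon)\cdot m(\mathcal{W}^{cs}_n(x))$ yields \eqref{BeforeLasteq} after absorbing $e^{-4\delta^2 d}$ into $e^{-7\delta^2 d}$. The main technical obstacle is producing an absolutely continuous $cs$-foliation in an exponentially small chart---the genuine Pesin $cs$-manifolds are defined only on a subset of positive but not full measure in $B^u(x,e^{-\delta n})$---and carefully balancing the H\"older-norm growth $e^{\delta n\alpha(d+1)}$ of $h_n$ against the mixing rate $e^{-\gamma n}$; the assumption $\epsilon<\gamma/(4\log M_f)$ is designed precisely to reconcile these two constraints.
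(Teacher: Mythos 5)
Your proposal is correct and follows essentially the same route as the paper's proof: Ledrappier--Young equivalence of $\nu_{\xi^u}$ with leaf volume on a Pesin block, the absolutely continuous fake $cs$-foliations of \cite{ExpMixBern} in exponentially small charts, saturation of $B^u(x,e^{-\delta n})\cap K_\epsilon$ to form $\mathcal{W}^{cs}_n(x)$, contraction of the plaques to replace $g\circ f^n$ up to $\|g\|_\alpha e^{-\delta n\alpha/2}$, and then \eqref{NAIC4} applied to a H\"older bump around the saturation before dividing by the normalization. The only discrepancy is minor bookkeeping (the paper uses a Lipschitz bump with $\mathrm{Lip}\leq 2e^{2\delta n}$ rather than your $e^{\delta n\alpha(d+1)}$, and the balance against $m(h_n)\approx e^{-\delta d n}$ needs $\delta$ small relative to $\gamma/(d+2)$ rather than the literal inequality you wrote), which does not affect the argument's structure.
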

For the definition of a measurable partition subordinated to the unstable foliation of $\nu$, see \cite{LedrappierYoungI}, and the respective conditional measures exists $\nu$-a.e by the Rokhlin disintegration theorem.
\begin{proof}\text{ }

\textbf{Step 1:} $\nu_{\xi^u(x)}= C^{\pm1 }\frac{1}{m_{\xi^u(x)}(1)}m_{\xi^u(x)}$ on a large set, where $m_{\xi^u(x)}$ is the induced Riemannian volume on $\xi^u(x)$ and $C>0$ is a constant close to $1$.

\textbf{Proof:} Let $\xi^u$ be a partition measurable partition subordinate to the unstable foliation of $\nu$ s.t $\xi^u(x)\supseteq B^u(x,r_x)$ for $\nu$-a.e $x$ (see \cite{LedrappierYoungI}). Let $\nu=\int \nu_{\xi^u(x)}d\nu(x)$ be the corresponding disintegration of $\nu$ given by the Rokhlin disintegration theorem. By %Theorem \ref{nonErgSRB}, 
Theorem \ref{posExps}, and \cite{LedrappierYoungI}, for $\nu$-a.e $x$, $\nu_{\xi^u(x)}\sim m_{\xi^u(x)}$. 
	
Denote by $\rho_x$ the Radon-Nikodym derivative $\frac{d\nu_{\xi^u(x)}}{dm_{\xi^u(x)}}$. By the construction of $\xi^u$, $\rho_x(x)$ is uniformly bounded a.e, since the elements of $\xi^u$ are contained in local unstable leaves, and moreover $\log \rho_x$ is $\frac{\beta}{3}$-H\"older continuous with a uniform H\"older constant (see \cite{LedrappierYoungI} for more details of this classical result).

 Therefore, given $\epsilon>0$ and a small $\delta\in(0,\epsilon)$, there exists $\ell_\epsilon=\ell_\epsilon(\delta)$ s.t $\nu\left(\Lambda_{\ell_\epsilon}^{(\underline\chi(\nu),\delta^3\tau_{\underline\chi(\nu)})}\right)\geq e^{-\epsilon^2}$. Let %$\ell_\epsilon$ large so $\mu(\{x:\ell_x^\epsilon\leq \ell_\epsilon\})\geq e^{-\epsilon^2}$, let 
 $0<\chi\leq \min\{\chi_i(\nu): \chi_i(\nu)\neq 0,i\leq d\}$ (w.l.o.g $\delta\leq \frac{\chi}{2}$), and set $K_\epsilon:=%\bigcup_{[\ell_x^\epsilon\leq \ell_\epsilon, \chi^+_{\min}(x)\geq \chi]}
  \Lambda_{\ell_\epsilon}^{(\underline\chi(\nu),\delta^3\tau_{\underline\chi(\nu)})}$. For all $x\in K_\epsilon$, the local unstable leaf of $x$ contains a relative open ball of radius at least $\frac{1}{2\ell_\epsilon}$.

\textbf{Step 2:} Absolutely continuous fake $cs$-foliation in $B(x,n\epsilon ,e^{-\delta n})$, for $x\in K_\epsilon$, by \cite[\textsection~5,6]{ExpMixBern}.

\textbf{Proof:} Given $x\in K_\epsilon$, and $n$ large enough so $e^{-\frac{\delta}{3} n}\ll \frac{1}{\ell_\epsilon}$, let $\Vcs_n(x)$ be a ``fake central-stable leaf" at $x$, constructed in \cite[Lemma~2.6]{ExpMixBern}. That is, for every $x\in K_\epsilon$ there exists %$L=L(\Lambda_{\ell_\delta}^{(\underline\chi,\tau)})$, and $H^\mathrm{cs}_n(x)$- 
a local submanifold of $x$ transversal to $\xi^u(x)$, $\Vcs_n(x)$, s.t $\forall 0\leq i\leq n$, 
\begin{enumerate}
	\item $f^i[B_{\Vcs_n(x)}(x,e^{-\delta n})]\subseteq B(f^{i}(x),e^{-\frac{\delta}{2} n})$, 
	\item $f^{i}[\Vcs_n(x)]$ is a graph of a function with a Lipschitz constant smaller or equal to $\frac{2\tau}{\chi}\leq \delta^2$ over $\psi_{f^{i}(x)}[\mathbb{R}^\mathrm{cs}\cap B(0,e^{-\frac{\delta}{2} n})]$ (where $\psi_y$ is the Pesin chart of $y$),
	\item $\{\Vcs(y):y\in B(x,e^{-\delta n})\cap K_\epsilon\}$ is a foliation of $B(x,e^{-\delta n})\cap K_\epsilon $ (\cite[\textsection~5]{ExpMixBern}).
\end{enumerate}

Moreover, by \cite[Proposition~6.4]{ExpMixBern},
\begin{enumerate}
	\item[(4)] For all $n$ large enough (when $\delta>0$ is small enough), the holonomy map $\pi$ along $\{\Vcs(x')\}_{x'\in K_\epsilon\cap \xi^u(x)}$ from $\xi^u(x)\cap K_\epsilon$ to $\xi^u(y)$, $y\in K_\epsilon\cap B(x,e^{-\delta n})$, has a Jacobian $\Jac(\pi)=e^{\pm \delta^2}$.
\end{enumerate}
In fact it follows that $\Jac(\pi|_{B^u(x,e^{-\delta n})})=e^{o(1)}$.

\textbf{Step 3:} For every $x\in K_\epsilon$ and $n$ large enough, and for every $g\in\mathrm{H\ddot{o}l}_\alpha^+(M)$, $\frac{1}{\nu_{\xi^u(x)}(%B^u(x,e^{-\delta n})
\mathcal{W}^\mathrm{cs}_n(x))}\int_{%B^u(x,e^{-\delta n})
\mathcal{W}^\mathrm{cs}_n(x)} g\circ f^nd\nu_{\xi^u(x)}=(\mu(g)\pm e^{-\frac{\gamma}{2}n}\|g\|_{\alpha-\mathrm{H\ddot{o}l}})e^{\pm\delta}$, where $\mathcal{W}^\mathrm{cs}_n(x)%:=\bigcup_{y\in K_\epsilon\cap B(x,e^{-\delta n})\cap \xi^u(x)}V_n^\mathrm{cs}(y)\cap B(x,e^{-\delta n})
$ is a foliation box in the chart of $x$.

\textbf{Proof:} We start by defining $\mathcal{W}^\mathrm{cs}_n(x)$ for $x\in K_\epsilon$. Let $R(x,e^{-\delta n}e^{2\delta^2}):=\psi_x(R(0,e^{-\delta n}e^{2\delta^2}))$, where $\psi_x$ is the Pesin chart of $x$, and $R(\cdot,r)$ is a ball of radius $r$ w.r.t to the metric $|\cdot|':=\max\{|\cdot_\mathrm{u}|_2, |\cdot_\mathrm{cs}|_2\}$, where $\mathrm{u},\mathrm{cs}$ are the corresponding components in the chart of $x$. In particular, given $y\in K_\epsilon\cap B^u(x,e^{-\delta n})$, $B_{V^\mathrm{cs}_n(y)}(y,e^{-\delta n})\subseteq R(x,e^{-\delta n}e^{2\delta^2})$, since $V^\mathrm{cs}_n(y)$ is the graph of a $\delta^2$-Lipschitz function in the chart of $x$. We define $\mathcal{W}^\mathrm{cs}_n(x):=\bigcup_{y\in K_\epsilon\cap B^u(x,e^{-\delta n})} B_{V^\mathrm{cs}_n(y)}(y,e^{-\delta n}) $. 

Let $x$ be a $\nu_{\xi^u(x)}$-density point of $K_\epsilon$ s.t $\frac{\nu_{\xi^u(x)}(K_\epsilon\cap B^u(x,r))}{\nu_{\xi^u(x)}(B^u(x,r))}\geq e^{-\delta^2}$, $\forall r\in(0,2e^{-\delta n})$. %For $x\in K_\epsilon$, let $m'=\frac{1}{(m_{\xi^u(x)}\times m_{\Vcs(x)})(B(x,n\epsilon,e^{-\delta n}))}(m_{\xi^u(x)}\times m_{\Vcs(x)})|_{B(x ,n\epsilon,e^{-\delta n})}$, and notice that for all $n$ large enough, $m'=e^{\pm e^{-\frac{2}{\beta}\delta n}}\frac{1}{m(B(x ,n\epsilon,e^{-\delta n}))}m|_{B(x ,n\epsilon,e^{-\delta n})}$ by the local product structure of the Riemannian volume, and the fact that $\xi^u(x)$ and $\Vcs$ are smooth $C^{1+\frac{2\beta}{3}}$ manifolds.
By the H\"older continuity of $\log \rho_x$, $m_{\xi^u(x)} =(\rho_x(x))^{-1}e^{\pm 2e^{-\frac{\beta}{3}\delta n} }\mu_{\xi^u(x)}$ on $B^u(x,2e^{-\delta n})$. Therefore, $\frac{m_{\xi^u(x)}(K_\epsilon\cap B(x,e^{-\delta n}))}{m_{\xi^u(x)}(B(x,e^{-\delta n}))}\geq e^{-2\delta^2}$ for all $n$ large enough. Finally, let $h$ be a Lipschitz function s.t $h|_{R(x,e^{-\delta n}e^{\delta^2})}=1$, $h|_{R(x, e^{-\delta n} e^{2\delta^2})^c}=0$, $\mathrm{Lip(h)}\leq 2e^{2\delta n }$. In particular, $m(\mathcal{W}^\mathrm{cs}(x))\geq e^{-2\delta^2d}m(h)$.

In addition, by %\cite[Corollary~6.6]{ExpMixBern}
the absolute continuity of the foliation $\mathcal{W}^\mathrm{cs}(x)$ and since the induced leaf volume of each laminate in $\mathcal{W}^\mathrm{cs}(x) $ is comparable up to a $e^{\pm2\delta^2d}$ factor, $\frac{1}{m(%B(x,e^{-\delta n})
\mathcal{W}^\mathrm{cs}_n(x))}m|_{%B(x,e^{-\delta n})
\mathcal{W}^\mathrm{cs}_n(x)}=e^{\pm 3\delta^2d}\frac{1}{m_{\xi^u(x)}(%B(x,e^{-\delta n})
\mathcal{W}^\mathrm{cs}_n(x))}m_{\xi^u(x)}|_{%B(x,e^{-\delta n})
\mathcal{W}^\mathrm{cs}_n(x)}$ for sets saturated by $W^\mathrm{cs}(x)$ %in $B(x,e^{-\delta n})$ 
for all $n$ large enough. Thus by Step 2,
\begin{align}\label{lasteq}
	\frac{1}{m(h)}\int h\cdot g\circ f^ndm=& \frac{m(\mathcal{W}^\mathrm{cs}_n(x))}{m(h)}\cdot \frac{1}{m(\mathcal{W}^\mathrm{cs}_n(x))}\int h\cdot g\circ f^ndm\nonumber\\
	\geq &\frac{m(\mathcal{W}^\mathrm{cs}_n(x))}{m(B(x,e^{-\delta n}))}\cdot \frac{m(B(x,e^{-\delta n}))}{m(h)}\cdot\frac{1}{m(\mathcal{W}^\mathrm{cs}_n(x))}\int \mathbb{1}_{\mathcal{W}^\mathrm{cs}_n(x)}\cdot g\circ f^ndm\nonumber\\
	\geq&  e^{-2\delta^2d}\cdot e^{-2\delta^2 d}\cdot e^{-3\delta^2d} \frac{1}{m_{\xi^u(x)}(\mathcal{W}^\mathrm{cs}_n(x))}\int\limits_{\mathcal{W}^\mathrm{cs}_n(x)} (g\circ f^n-\|g\|_{\alpha\mathrm{-H\ddot{o}l}}e^{-\frac{\delta}{2}n\alpha}) dm_{\xi^u(x)}\nonumber\\
	\geq&  e^{-7\delta^2 d} \frac{1}{\nu_{\xi^u(x)}(\mathcal{W}^\mathrm{cs}_n(x))}\int\limits_{\mathcal{W}^\mathrm{cs}_n(x)} (g\circ f^n-\|g\|_{\alpha\mathrm{-H\ddot{o}l}}e^{-\frac{\delta}{2}n\alpha}) d\nu_{\xi^u(x)}.
%	
%	e^{\pm2\delta^2} \frac{1}{m_{\xi^u(x)}(B(x ,e^{-\delta n}))}\int\limits_{B(x ,e^{-\delta n})} (g\circ f^n\pm\|g\|_{\alpha\mathrm{-H\ddot{o}l}}e^{-\frac{\delta}{2}n\alpha}) dm_{\xi^u(x)}\\
%	=& e^{\pm6\delta^2 } \frac{1}{\nu_{\xi^u(x)}(B(x ,e^{-\delta n}))}\int\limits_{B(x ,e^{-\delta n})\cap K_\epsilon} (g\circ f^n\pm\|g\|_{\alpha\mathrm{-H\ddot{o}l}}e^{-\frac{\delta}{2}n\alpha}) d\nu_{\xi^u(x)}.
\end{align}
By \eqref{NAIC4}, the l.h.s equals to $\mu(g)\pm C\cdot \|g\| _{\alpha\mathrm{-H\ddot{o}l}} e^{-(\gamma-2\delta)n}$, and for all $\delta>0$ small enough, we are done by choosing $G_\epsilon$ to be a density set of $K_\epsilon$ with uniform estimates as in step 3.
\end{proof}

\noindent\textbf{Remark:} An upper bound for \eqref{BeforeLasteq} can be achieved similarly through \eqref{lasteq}, although the error term will not be exponentially small in $n$; however the error term is to the already averaged quantity.

\begin{cor}[Uniqueness]
The system $(M,f)$ admits %at most 
exactly one ergodic SRB measure% with positive entropy
, and %if it exists 
it is the measure $\mu$. In particular, $\mu$ is ergodic.
\end{cor}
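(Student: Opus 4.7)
The plan is to deduce the corollary from Theorem~\ref{nonErgSRB}, Theorem~\ref{posExps}, and Proposition~\ref{UMix}. First I would observe that Theorem~\ref{nonErgSRB} gives $h_\mu(f)=\int \chi^u\,d\mu$, so by \cite{LedrappierYoungI} the measure $\mu$ is SRB (possibly in the degenerate sense that $\chi^u=0$ $\mu$-a.e.). The dichotomy of Theorem~\ref{posExps} then splits the argument. In case~(2) $\mu=\delta_p$ is trivially an ergodic SRB, while any other ergodic $f$-invariant probability $\nu$ has $\max_i \chi_i^+>\gamma/(2d)$ $\nu$-a.e.; if such a $\nu$ were also SRB, then Pesin's entropy formula would give $h_\nu(f)>0$. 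In case~(1) every ergodic component $\mu_x$ of $\mu$ already satisfies $\max_i \chi_i^+(x)>\gamma/(2d)$ $\mu_x$-a.e., hence $h_{\mu_x}(f)>0$ for $\mu$-a.e.\ $x$. Thus the corollary reduces to the following key claim: every ergodic SRB measure $\nu$ with $h_\nu(f)>0$ coincides with $\mu$. In case~(1) this forces each $\mu_x=\mu$, so $\mu$ is ergodic and is the unique ergodic SRB, while in case~(2) any ergodic SRB distinct from $\delta_p$ would have positive entropy and therefore equal $\mu=\delta_p$, a contradiction.

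To establish the key claim I would fix such a $\nu$, a positive H\"older function $g$, and a $\nu$-typical $x\in G_\epsilon$, and apply Proposition~\ref{UMix} not to $g$ directly but to each of the functions $g\circ f^k$ for $k=0,1,\ldots,K-1$. Since $\|g\circ f^k\|_{\alpha\mathrm{-H\ddot{o}l}}\leq\|g\|_\alpha M_f^{k\alpha}$ and $\mu(g\circ f^k)=\mu(g)$ by $f$-invariance, inequality \eqref{BeforeLasteq} reads
\[
\mu(g)\geq\frac{e^{-7\delta^2 d}}{\nu_{\xi^u(x)}(B^u(x,e^{-\delta n})\cap K_\epsilon)}\int_{B^u(x,e^{-\delta n})\cap K_\epsilon} g\circ f^{n+k}\,d\nu_{\xi^u(x)}-C\|g\|_\alpha M_f^{k\alpha}e^{-(\gamma-2\delta)n}.
\]
Averaging over $k$ with $n=n(K)$ coupled so that $n(K)\geq 2\alpha\log(M_f)K/(\gamma-2\delta)$ (ensuring the error term is $o_K(1)$ uniformly in $k\leq K-1$), and swapping sum with integral, yields
\[
\mu(g)\geq\frac{e^{-7\delta^2 d}}{\nu_{\xi^u(x)}(B^u(x,e^{-\delta n(K)})\cap K_\epsilon)}\int_{B^u(x,e^{-\delta n(K)})\cap K_\epsilon}\Big(\frac{1}{K}\sum_{k=0}^{K-1}g\circ f^{n(K)+k}\Big)\,d\nu_{\xi^u(x)}-o_K(1).
\]
By ergodicity of $\nu$ and Birkhoff's theorem, the inner Birkhoff average converges to $\nu(g)$ on a set of full $\nu$-measure, which (because $\nu$ is SRB with a.c.\ conditionals on unstable leaves) has full $\nu_{\xi^u(x)}$-measure for $\nu$-typical $x$. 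A Lebesgue-differentiation argument at a $\nu_{\xi^u(x)}$-density point $x$ of the set of $\nu$-generic points then lets me pass to the limit through the shrinking balls, producing $\mu(g)\geq e^{-7\delta^2 d}\nu(g)$. The companion upper bound from the remark following Proposition~\ref{UMix} gives the reverse inequality, and letting $\delta,\epsilon\downarrow 0$ yields $\mu(g)=\nu(g)$ for every positive H\"older $g$. Density of positive H\"older functions in $C_+(M)$ together with the Riesz representation theorem then gives $\mu=\nu$.

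The step I expect to be the main obstacle is the joint limit $K\to\infty$, $n=n(K)\to\infty$, in which the ball $B^u(x,e^{-\delta n(K)})$ shrinks to $\{x\}$ while the averaging window $[n(K),n(K)+K)$ grows: the Birkhoff convergence inside the integral has to be made uniform on the shrinking ball so that the normalized $\nu_{\xi^u(x)}$-average still tends to $\nu(g)$. I plan to handle this by restricting $x$ to a Lusin subset of $G_\epsilon$ on which the rate of convergence in Birkhoff's theorem is uniformly controlled and on which $x$ is a $\nu_{\xi^u(x)}$-density point of the set of $\nu$-generic points, and then working along a subsequence of $K$ for which successive balls are comparable (e.g.\ $n(K_{j+1})\leq 2n(K_j)$) so that the normalized conditional measures concentrate nicely. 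Once this is in place, the corollary assembles as described: the key claim collapses the ergodic decomposition of $\mu$ to a single atom in case~(1) and rules out any competing ergodic SRB measure in case~(2).
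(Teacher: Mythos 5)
Your proposal is correct and follows essentially the paper's own route: the reduction via Theorems \ref{nonErgSRB} and \ref{posExps} to the claim that any ergodic SRB measure of positive entropy must equal $\mu$, and the proof of that claim by feeding Birkhoff window averages of a $\nu$-generic, conditional-density point into Proposition \ref{UMix} (the paper encodes the time averaging in the set $E_{n'}$ with window $[n,2n)$ rather than by applying \eqref{BeforeLasteq} to $g\circ f^k$, but this is the same mechanism), followed by collapsing the ergodic decomposition of $\mu$. Two small adjustments: your coupling $n(K)\gtrsim K$ controls only the outer error term, while the inner approximation term $\|g\circ f^k\|_{\alpha}e^{-\frac{\delta}{2}\alpha n}\leq \|g\|_{\alpha}M_f^{K\alpha}e^{-\frac{\delta}{2}\alpha n}$ forces the stronger coupling $n(K)\gtrsim K/\delta$; and the reverse inequality via the remark after Proposition \ref{UMix} is unnecessary, since $\mu\geq e^{-7\delta^2 d}\nu$ for all $\delta>0$ between two probability measures already yields $\mu=\nu$, which is exactly how the paper concludes.
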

\begin{proof}
Let $\nu$ be an ergodic SRB measure (in the sense of the entropy formula), we wish to show that $\nu=\mu$. Therefore assume that $\nu\neq\mu$, and by Theorem \ref{posExps}, $h_\nu(f)=\sum\chi^+(\nu)>0$. Let $g\in \mathrm{Lip}(M)$ s.t $0\leq g\leq 1$. Let $\epsilon>0$, and $x\in G_\epsilon$ which is $\nu$-generic for $g$. Assume further that $x$ is a $\nu_{\xi^u(x)}$-density point of $E_{n'}:=\{x'\in K_\epsilon:\forall n\geq n', \frac{1}{n}\sum_{k=n}^{2n-1}g\circ f^k=e^{\pm \delta}\nu(g)\}$ s.t $\frac{\nu_{\xi^u(x)}(B^u(x,e^{-\delta n})\cap E_{n'})}{\nu_{\xi^u(x)}(B^u(x,e^{-\delta n}))}\geq e^{-\delta}$, for some large $n'$. Then, for all $n$ large enough,
	$\mu(g)\geq e^{-\delta}(e^{-7d\delta^2}\nu(g)- \|g\|_{\mathrm{Lip}}e^{-\frac{n\alpha\delta}{3}})$. If $\nu(g)=0$, then $\mu(g)=0$, otherwise for all $n$ large enough w.r.t $g$, $\mu(g)\geq e^{-8d\delta^2}\nu(g)$, hence $\nu=\mu$. In particular, $\mu\geq e^{-8d\delta^2}\nu$ (by the Riesz representation theorem, and since $\overline{\mathrm{Lip}^+(M)}^{C(M)}=C(M)$). Since $\delta>0$ is arbitrary, $\mu\geq \nu$ for every ergodic SRB measure $\nu$ s.t $h_\nu(f)>0$.
	
Assume that $\mu$ can be written as $\mu=a\mu_1+(1-a)\frac{\mu-\mu_1}{1-a}$ with $a\in (0,1)$ and $\mu_1\perp (\mu-\mu_1)$. If $\mu$ admitted an ergodic component with no positive Lyapunov exponents, then by Theorem \ref{posExps} $\mu$ is a Dirac mass at a fixed point, which contradicts the fact that $\mu\geq \nu$ with $h_\nu(f)>0$. Therefore $\mu_1$ admits a positive Lyapunov exponent a.e. Therefore $\mu\geq \mu_1$.

Let $G$ be a set s.t $\mu_1(G)=1$ and $(\mu-\mu_1)(G)=0$. Then $1>a=a\mu_1(G)=\mu(G)\geq \mu_1(G)=1$, a contradiction! Thus $\mu$ is ergodic and has positive entropy.
%	
%Next, we may assume that $x$ is a $\mu$-density point of $E_{n'}':=\{x\in G_\epsilon: \frac{\nu_{\xi^u(x)}(B^u(x,e^{-\delta n})\cap E_{n'})}{\nu_{\xi^u(x)}(B^u(x,e^{-\delta n}))}\geq e^{-\delta}\}$ so $\frac{\mu(B(x,e^{-\delta n})\cap E_{n'}')}{\mu(B(x,e^{-\delta n}))}\geq e^{-\delta}$, and denote the set of all such points by $G_\epsilon'$. Then for all $n$ large enough w.r.t $\epsilon$ and $\delta$, 
%\begin{equation}
%	\frac{1}{\mu(B(x,e^{-\delta n}))}\int_{B(x,e^{-\delta n})}g\circ f^n d\mu= e^{\pm\delta}\frac{1}{\mu(B(x,e^{-\delta n}))}\int\int \mathbb{1}_{B(x,e^{-\delta n})\cap E_{n''}'}g\circ f^n d\mu_{\xi^u(y)}d\mu(y) =e^{\pm \delta}
%\end{equation}	
\end{proof}

\noindent\textbf{Remark:} The weak physicality with full Basin of $\mu$ (Theorem \ref{PHYSICS}) also implies that $(M,f)$ may admit no physical measures aside for at most $\mu$.

\bibliographystyle{alpha}
\bibliography{Elphi}

\end{document}